\newtheorem{proposition}{Proposition}[section]
\newtheorem{lemma}[proposition]{Lemma}
\newtheorem{corollary}[proposition]{Corollary}
\newtheorem{theorem}[proposition]{Theorem}
\theoremstyle{definition}
\newtheorem{definition}[proposition]{Definition}
\newtheorem{remark}[proposition]{Remark}
\newenvironment{prop}[2][Proposition]{\begin{trivlist}
\item[\hskip \labelsep {\bfseries #1}\hskip \labelsep {\bfseries #2}]}{\end{trivlist}}
\newcommand{\thlabel}[1]{\label{th:#1}}
\newcommand{\thref}[1]{Theorem~\ref{th:#1}}
\newcommand{\selabel}[1]{\label{se:#1}}
\newcommand{\seref}[1]{Section~\ref{se:#1}}
\newcommand{\lelabel}[1]{\label{le:#1}}
\newcommand{\leref}[1]{Lemma~\ref{le:#1}}
\newcommand{\prlabel}[1]{\label{pr:#1}}
\newcommand{\prref}[1]{Proposition~\ref{pr:#1}}
\newcommand{\colabel}[1]{\label{co:#1}}
\newcommand{\coref}[1]{Corollary~\ref{co:#1}}
\newcommand{\relabel}[1]{\label{re:#1}}
\newcommand{\reref}[1]{Remark~\ref{re:#1}}
\newcommand{\delabel}[1]{\label{de:#1}}
\newcommand{\eqlabel}[1]{\label{eq:#1}}
\newcommand{\equref}[1]{(\ref{eq:#1})}
\numberwithin{equation}{section}
\newcommand{\mc}{\mathcal}
\newcommand{\mb}{\mathbb}
\newcommand{\mf}{\mathfrak}
\newcommand{\ds}{\displaystyle}
\newcommand{\ol}{\overline}
\newcommand{\la}{\langle}  \newcommand{\ra}{\rangle}
\newcommand{\analog}{\rule{1cm}{0.01mm}}
\begin{document}

\title{Grothendieck rings of universal quantum groups}
\author{Alexandru Chirv\u asitu}

\address{
University of California, Berkeley, 970 Evans Hall \#3480, Berkeley CA, 94720-3840, USA
}
\email{chirvasitua@gmail.com}

\subjclass[2010]{16T05, 16T15, 16T20, 20G42}
\keywords{free Hopf algebra, cosovereign Hopf algebra, matrix coalgebra, Grothendieck ring of comodules, corepresentation}

\begin{abstract}

We determine the Grothendieck ring of finite-dimensional comodules for the free Hopf algebra on a matrix coalgebra, and similarly for the free Hopf algebra with bijective antipode and other related universal quantum groups. The results turn out to be parallel to those for Wang and Van Daele's deformed universal compact quantum groups and Bichon's generalization of those results to universal cosovereign Hopf algebras: in all cases the rings are isomorphic to those of non-commutative polynomials over certain sets, these sets varying from case to case. In most cases we are able to give more precise information about the multiplication table of the Grothendieck ring.  

\end{abstract}

\maketitle

\section*{Introduction}\selabel{0}

The representation theory of quantum groups has played an important role in mathematics during the past several decades. Several approaches can be identified, which yield interesting different, but often related families of Hopf algebras. One has, for example, Drinfeld and Jimbo's deformed universal enveloping algebras (\cite{Dr1, Dr2, Ji}), the compact matrix groups of Woronowicz (\cite{Wo1, Wo2}), or various ``quantum automorphism groups'', such as those of Manin (\cite{Ma}), the quantum group of a bilinear form (\cite{DVL}), that of a measured algebra (\cite{Bi1}), etc. 

The ``universal quantum groups" in the title are Hopf algebras which enjoy certain universality properties; they are described in more detail below. We are interested in their finite-dimensional comodules, so they are to be regarded as quantum groups of the ``function algebra" flavor.  

One class of Hopf algebras which will be relevant to our discussion and will provide the motivation for what follows is that of universal or free {\it cosovereign} Hopf algebras. These were introduced by Bichon in \cite{Bi2}, and are defined essentially as follows: given an invertible $n\times n$ matrix $F$, the universal cosovereign Hopf algebra $H(F)$ is the free Hopf algebra generated by an $n\times n$ matrix coalgebra $u=(u_{ij})$ with the provision that the squared antipode acts on $u$ as conjugation by $F$ (see \cite{Bi2} for more details).

The main objects of study here are the following:

(1) The free Hopf algebra $H(n)$ on the matrix coalgebra $M_n(k)^*$ (for some field $k$ and $n\ge 2$). It was shown in \cite{Ta} that the forgetful functor from Hopf algebras to coalgebras (always over some fixed base field $k$) has a left adjoint. $H(n)$ is precisely the image of the matrix coalgebra $M_n(k)*$ through this adjoint. 

(2) $H_\infty(n)$, the free Hopf algebra with bijective antipode on the same matrix coalgebra $M_n(k)^*$. As in (1), it is shown in \cite{Sc} that the forgetful functor from Hopf algebras with bijective antipode to that of coalgebras has a left adjoint. Just as before, $H_\infty(n)$ denotes here the image of the matrix coalgebra through that adjoint.

(3) We introduce an object denoted by $H_d(F)$. Here $d$ is a positive integer, while $F$ is an invertible $n\times n$ matrix over $k$. With this data, $H_d(F)$ is the free Hopf algebra generated by a matrix coalgebra $u=(u_{ij})$ such that the $2d$'th power of the antipode acts on $u$ as conjugation by $F$. We chose to consider these objects because they generalize at the same time the universal cosovereign Hopf algebras discussed above ($H(F)$ from \cite{Bi2} would be $H_1(F)$ here), and the free Hopf algebra with antipode of order $2d$ on a matrix coalgebra, used in \cite{Ch} ($H_d(M_n(k)^*)$ from that paper is $H_d(I_n)$ here, where $I_n\in M_n(k)$ is the identity matrix). 
      
Finally, we reserve the notation $\tilde H$ or $\tilde H(n)$ as a placeholder for any of the above; the $n$ indicates that we are considering either $H(n)$, or $H_\infty(n)$, or $H_d(F)$ for some $n\times n$ matrix $F\in GL(n,k)$. We will be concerned primarily with determining the Grothendieck rings of finite-dimensional comodules for the various $\tilde H(n)$'s.

It turns out that when the base field is $\mb C$ and the matrix $F$ used in the definition of $H(F)$ is positive definite, the $H(F)$ are precisely the CQG algebras (in the sense of \cite{DK}, for example) associated to Wang and Van Daele's compact quantum matrix groups $A_u(Q)$ (\cite{VDW}). The corepresentations of the latter were determined by B\u anic\u a in \cite{Ba}, and the results were later generalized by Bichon (\cite{Bi4}) to include all cosemisimple $H(F)$'s in characteristic zero. The corepresentations of $A_u(Q)$ (and by extension those of $H(F)$) are of interest because collectively, the $A_u(Q)$ play the role of the unitary group $U(n)$ (see \cite{Ba}). We will recall the relevant results in the next section. 

This discussion provides part of the motivation for our problem: the combinatorics of the multiplication table for the Grothendieck rings under consideration turns out to mimic the results obtained in \cite{Ba} and \cite{Bi4} quite closely, and seems interesting in its own right. Essentially, our results say that at least for $\tilde H(n)$ excluding $H_1(F)$, the Grothendieck ring is ``as free of relations'' as one can expect (see the next section for precise statements).  

Further motivation comes from the desire to obtain more information on the free Hopf algebras $H(n)$ (and their relatives). Ever since the introduction of $H(n)$ (and in fact of the free Hopf algebra on any coalgebra) by Takeuchi in \cite{Ta}, where they were used to give the first examples of Hopf algebras with non-bijective antipode, they have appeared in several other papers, also as the basis for counterexamples: in \cite{Ni}, Nichols constructs a basis for $H(n)$, proves that its antipode is injective, and then constructs a quotient bialgebra of $H(2)$ which is not a Hopf algebra. In a similar vein, in \cite{Sc}, Schauenburg introduces $H_\infty(n)$ and constructs a quotient Hopf algebra of $H_\infty(4)$ whose antipode is not injective, thus giving the first example of a non-injective surjective antipode. In view of their universal properties, objects such as $H(n)$ and $H_\infty(n)$ are well-suited to be starting points for the construction of counterexamples (as seen above), so it seems worthwhile to gather more information about their structure.

The paper is organized as follows:

In \seref{1} we set up the notations, introduce some preliminary results needed later on, and state our main theorems. 

In \seref{2} Bergman's diamond lemma (\cite{Be}) is used to find bases for the objects of interest $\tilde H(n)$, $n\ge 2$. These bases are somewhat different from those which have appeared in the literature (\cite{Ni, Sc}), and will prove more convenient for our goals. 

In \seref{3} we prove that the Grothendieck rings of finite-dimensional comodules of the Hopf algebras $\tilde H$ are non-commutative polynomial rings. 

\seref{4} contains the main results of this paper, determining the multiplication table of the Grothendieck (semi)ring of $\tilde H$ for all cases except for the $H_1(F)$'s, and recovering the known results on the latter assuming cosemisimplicity. 

\section{Preliminaries}\selabel{1}

We begin by introducing the main conventions and some of the notation, and recalling some generalities on the Hopf algebras alluded to in the previous section. 

We will be working over a fixed base field $k$, which will henceforth be assumed to be algebraically closed. This assumption will simplify things by ensuring, for example, that all simple coalgebras are actually matrix coalgebras. Here, by matrix coalgebra we mean the dual $M_n(k)^*$ of the usual algebra $M_n(k)$ of $n\times n$ matrices over $k$. $M_n(k)^*$ has a basis $\ds (x_{ij})_{i,j=1}^n$ with the coalgebra structure being defined by
\begin{equation}\eqlabel{matrix coalgebra}
\Delta(x_{ij})=\sum_{k=1}^nx_{ik}\otimes x_{kj},\ \varepsilon(x_{ij})=\delta_{ij}, 
\end{equation} 
where $\Delta,\varepsilon$ stand, as usual, for the comultiplication and counit respectively, and $\delta_{ij}$ is the Kronecker symbol. 
The terminology ``matrix coalgebra'' always refers to $M_n(k)^*$ in this paper. A collection of not necessarily linearly independent elements $x_{ij}$ in a coalgebra (bialgebra, Hopf algebra) satisfying \equref{matrix coalgebra} will be referred to as a {\it multiplicative matrix} (following \cite{Ma}). Note that the linear span of a multiplicative matrix is a coalgebra.

We assume familiarity with Hopf algebra theory as appearing, for example, in \cite{Sw, A, Mo}. We also use the standard notations: $\Delta,\varepsilon, S$ for comultiplication, counit and antipode respectively. The words `comodule' and `corepresentation' are used interchangeably, and unless specified otherwise, all comodules are right and finite-dimensional. 

For a Hopf algebra $H$, $\mc M^H$ denotes the category of (finite-dimensional, right) $H$-comodules. The Grothendieck ring of such comodules will be denoted by $K(H)$. Sometimes, when there is no danger of confusion, we might denote a comodule and its representative in the Grothendieck ring by the same symbol. As the category of comodules is left rigid, we have an anti-endomorphism $*$ on $K(H)$, sending the representative of a comodule to the representative of its (left) dual. We might denote the map either by $u\mapsto u^*$ or by $u\mapsto *(u)$. The trivial $H$-comodule will be denoted by $1$; it is the multiplicative identity of the ring $K(H)$.  

In fact, we will also be concerned with the Grothendieck semiring $K_+(H)$, by which we mean the sub-semiring of $K(H)$ generated by the representatives of the comodules. $K_+(H)$ is, of course, invariant under $*$. It is well known that $K(H)$ has a basis (as an abelian group) formed by the set $\mc S=\mc S(H)$ of (isomorphism classes of) simple comodules. There is a natural order on $K$, for which $K_+$ is the positive cone. With this order, $K(H)$ is also a lattice; $\vee$ will denote the supremum operation on this lattice. 

Note that there is a bijection between $\mc S(H)$ and the set of matrix subcoalgebras of $H$, the simple comodule $M$ corresponding to the smallest subcoalgebra $C$ such that the comodule structure map of $M$ factors as 
\[
\rho:M\to M\otimes C\to M\otimes H
\]
(the last map being induced by the inclusion $C\to H$). $C$ is precisely the linear span of the $x_{ij}$, which are uniquely determined by  
\[
\rho(e_j)=\sum_{i=1}^n e_i\otimes x_{ij},\ j=\ol{1,n}.
\]
More generally, the same construction for an $n$-dimensional (not necessarily simple) comodule $M$ yields an $n\times n$ multiplicative matrix in $H$ as soon as we fix a basis $(e_i)_{i=1}^n$ for $M$. In this context, we write $C$ as $C(M)$ and refer to $C$ as the coalgebra corresponding to the comodule $M$.    

The Hopf algebras of interest have already been introduced in the preceding section: they are $H(n)$, the free Hopf algebra on an $n\times n$ matrix coalgebra, $H_\infty(n)$, the free Hopf algebra with bijective antipode on an $n\times n$ matrix coalgebra, and $H_d(F)$, where $d$ is a positive integer and $F\in GL(n,k)$ is an invertible $n\times n$ matrix. $n\ge 2$ will always be assumed, and as stated in the introduction, we use $\tilde H$ (or $\tilde H(n)$ if we want to be more precise) as a generic symbol for any of these Hopf algebras.  

Recall (\cite{Ta,Ni}) that $\tilde H=H(n)$ is defined as follows: one has a multiplicative matrix $\ds X^r=(x^r_{ij})_{i,j}$ for each non-negative integer $r$, satisfying the relations
\begin{equation}\eqlabel{deffree}
\sum_{k=1}^n x^r_{ik}x^{r+1}_{jk}=\delta_{ij}=\sum_{k=1}^n x^{r+1}_{ki}x^r_{kj},\ \forall i,j,r.
\end{equation}
In other words, the transpose $\ds \left(X^{r+1}\right)^t$ is the inverse (in $M_n\left(\tilde H\right)$) of $X^r$. The antipode sends $X^r$ to this transpose, i.e. acts by $S(x^r_{ij})=x^{r+1}_{ji}$. An entirely analogous presentation can be given for $H_{\infty}(n)$, except that this time, $r$ runs through the integers instead of the non-negative integers (see \cite{Sc}).

As for $\tilde H=H_d(F)$, we again have multiplicative matrices $X^r$ as above, but this time $r$ runs through $\mb Z/2d$, the integers modulo $2d$, and the relations \equref{deffree} hold as stated for $r=\ol{0,2d-2}$. For $r=2d-1$ we have instead (in compressed form, using the matrices $X$)
\begin{equation}\eqlabel{deffree 2d}
(X^{2d-1})^{-1}=F(X^0)^tF^{-1}. 
\end{equation}  
That is, instead of making the transpose $(X^0)^t$ the inverse of $X^{2d}$, we ``twist'' by $F$. 

Notice that all the $\tilde H(n)$ have a distinguished $n$-dimensional corepresentation, corresponding to the multiplicative matrix $X^0$: it is a vector space with basis $e_i,\ i=\ol{1,n}$ on which $\tilde H$ acts by
\begin{equation*}\eqlabel{fund}
e_j\mapsto \sum_{i=1}^n e_i\otimes x^0_{ij}. 
\end{equation*}
We refer to this as the {\it fundamental} corepresentation of $\tilde H$, and we will usually denote its representative in $K_+(\tilde H)$ by $f$.  

Finally, whenever we discuss one of the Hopf algebras $\tilde H$, $R=R(\tilde H)$ stands for the set over which the $r$ in the notation $X^r$ used above range: $R=\mb N$, the set of non-negative integers for $\tilde H=H(n)$, $R=\mb Z$ for $\tilde H=H_\infty(n)$, and $R=\mb Z/2d$ when $\tilde H=H_d(F)$.

We can now state the theorems proven in the paper. First, we explain the weaker results, but which hold in greater generality, to be proven in \seref{3}. 

Suppose we are working with $\tilde H$. Consider the free monoid $A_R$ on $R$, with generators $\alpha_r,\ r\in R$, and endow it with the unique anti-endomorphism $*$ sending $\alpha_r$ to $\alpha_{r+1}$ for all $r\in R$. We will refer to the elements of $A_R$ as words in the $\alpha_r$'s, as usual, and for convenience, $\alpha_r$ and $r$ might be identified when there is no danger of confusion. We have a partial order on $A_R$, given by the length of the words. 

There is a unique monoid map $\phi:A_R\to K=K(\tilde H)$ which intertwines the anti- endomorphisms $*$ and sends $\alpha_0$ (for $0\in R$) to the fundamental corepresentation $f$. Now write
\begin{equation}\eqlabel{K}
\phi(x)=\sum_{s\in\mc S'}n_ss+\sum_{s\in\mc S''}n_ss,
\end{equation}    
where $n_s$ are positive integers, and $\mc S''$ is the set of those $s$ which appear in a similar expansion for $\phi(y)$, $y<x$ (i.e. $y\in A_R$ is shorter than $x$). Denote the first sum in the right hand side of \equref{K} by $u_x$. Our first theorem is then the following:

\begin{theorem}\thlabel{freeness}

With $\tilde H$ as above, the map $x\mapsto u_x$ induces a bijection between $A_R$ and $\mc S(\tilde H)$. 

\end{theorem}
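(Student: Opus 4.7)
The plan is to argue by induction on the length $|x|$ of the word $x\in A_R$, using the explicit bases of $\tilde H$ produced in \seref{2} via the diamond lemma. The base cases $|x|=0,1$ are immediate: $\phi(\emptyset)=1$ is the trivial simple, and for $|x|=1$ the coalgebra spanned by the $n^2$ entries $x^r_{ij}$ is a simple matrix coalgebra of dimension $n^2$, since those entries appear as distinct basis elements in \seref{2}; hence $\phi(\alpha_r)$ itself is simple.

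For the inductive step, fix $x=\alpha_{r_1}\cdots\alpha_{r_m}$ with $m\ge 2$ and consider the subcoalgebra $C(\phi(x))\subseteq\tilde H$ spanned by all products $x^{r_1}_{i_1 j_1}\cdots x^{r_m}_{i_m j_m}$. The defining relations \equref{deffree} (together with \equref{deffree 2d} in the $H_d(F)$ case) imply that whenever two neighboring letters of $x$ have the form $(\alpha_r,\alpha_{r+1})$ or $(\alpha_{r+1},\alpha_r)$, contracting over the adjoining indices produces elements lying in $C(\phi(x'))$ for the shorter word $x'$ obtained by deleting that pair. Thus $\sum_{y<x}C(\phi(y))\subseteq C(\phi(x))$, and the diamond-lemma normal forms from \seref{2} single out a canonical complement spanned by those monomials that cannot be contracted away. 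The crucial structural claim is that this complement is the underlying space of a single simple matrix subcoalgebra $C(s_x)\subseteq\tilde H$, with $s_x$ appearing in $\phi(x)$ exactly once; this then accounts for all of the ``genuinely new'' irreducibles and gives $u_x=[s_x]\in\mc S(\tilde H)$.

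Injectivity of $x\mapsto s_x$ follows from the construction, since the $C(s_x)$ for distinct $x$ are distinct subcoalgebras of $\tilde H$ (each contributes a fresh block of irreducible monomials not present in any shorter $C(\phi(y))$). Surjectivity holds because the $x^r_{ij}$ generate $\tilde H$ as an algebra, so every simple subcoalgebra of $\tilde H$ is contained in $C(\phi(x))$ for some $x$; choosing such an $x$ of minimal length forces the simple coalgebra to coincide with $C(s_y)$ for some $y\in A_R$.

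The principal obstacle is the combinatorial verification in the second paragraph: one must exploit the diamond-lemma basis of \seref{2} to show that the irreducible monomials not belonging to $\sum_{y<x}C(\phi(y))$ assemble into a single simple matrix coalgebra rather than splitting into several, and that the resulting simple comodule appears with multiplicity exactly one in $\phi(x)$ rather than higher. Once this structural fact is in hand, the remaining bijectivity assertions reduce to routine Grothendieck-ring bookkeeping.
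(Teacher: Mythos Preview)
Your outline correctly identifies the architecture of the argument---the reduction to showing each $u_x$ is simple, the use of the diamond-lemma basis, and the observation that distinct $x$ yield distinct simple coalgebras because of the ``fresh'' top-degree monomials---but the piece you label ``the principal obstacle'' is not a technicality: it \emph{is} the theorem. You have not given any mechanism for ruling out the possibility that the new simples in $\phi(x)$ split into several isomorphism classes, or that a single new simple occurs with multiplicity greater than one. Merely invoking the diamond-lemma basis does not do this; the span of irreducible type-${\bf r}$ monomials is not itself a subcoalgebra (the comultiplication of such a monomial, once reduced, generally produces shorter terms), so there is no automatic way to read off the coalgebra structure of the ``complement'' from the combinatorics of irreducible words.

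The paper supplies exactly the missing idea. After reducing (via Lemmas~\ref{le:freeness}--\ref{le:not cosovereign} and a Hopf--Galois argument for $H_1(F)$) to $\tilde H=H(n)$ or $H_1(F)$ with diagonal $F$, it uses the Hopf algebra surjection $\psi:\tilde H\to k[F_n]$ onto the group algebra of the free group on $n$ generators, obtained by killing the off-diagonal $x^0_{ij}$. A short comultiplication argument shows that \emph{every} simple subcoalgebra of $C_{\bf r}$ contains an element whose standard form involves the specific monomial $x^{\bf r}_{\boldsymbol{\ell}\boldsymbol{\ell}}$ with $\boldsymbol{\ell}=(1,n,1,n,\ldots)$; hence every simple constituent of $u_{\bf r}$, after corestriction along $\psi$, contains the one-dimensional $k[F_n]$-comodule $v$ corresponding to the alternating word $x_1^{\pm 1}x_n^{\pm 1}\cdots$. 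If $u_{\bf r}$ were not simple this would force $2v\le\psi(f_{\bf r})$, contradicting the fact that exactly one monomial of type ${\bf r}$ maps to that word in $F_n$. This free-group multiplicity trick is the heart of the proof and is entirely absent from your proposal.
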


In other words, the simple comodules of $\tilde H$ can be labeled in a very natural manner by the elements of the free monoid $A_R$. We will also see in \seref{3} that this easily implies the following:

\begin{corollary}\colabel{freeness}

The Grothendieck ring $K(\tilde H)$ is isomorphic to the free unital algebra $\mb Z[A_R]$ on $R$. 

\end{corollary}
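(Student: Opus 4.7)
The plan is to deduce the corollary from \thref{freeness} essentially by a triangularity argument. First, I would observe that the monoid map $\phi : A_R \to K(\tilde H)$ introduced just before the theorem extends by $\mb Z$-linearity to a unital ring homomorphism
\[
\tilde\phi : \mb Z[A_R] \to K(\tilde H),
\]
simply because $\phi$ is already multiplicative on the monoid. Since $\mb Z[A_R]$ is the free unital $\mb Z$-algebra on the generators $\alpha_r$, $r \in R$, the task is reduced to showing that $\tilde\phi$ is bijective.

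Next, I would exploit the length filtration on $A_R$. For each $N \ge 0$, let $A_R^{\le N}$ denote the set of words of length at most $N$ and let $K_N \subset K(\tilde H)$ be the $\mb Z$-span of those simples of the form $u_y$ with $y \in A_R^{\le N}$, where $u_y$ is the single simple summand of $\phi(y)$ guaranteed by \thref{freeness}. By the very construction of $u_x$, for any word $x$ of length $N$ we have an expansion
\[
\phi(x) = u_x + \sum_{y \in A_R^{<N}} m_{x,y}\, u_y
\]
with $m_{x,y} \in \mb Z_{\ge 0}$; here the terms beyond $u_x$ are exactly those simples which appear in some $\phi(y)$ for $y$ strictly shorter than $x$, grouped according to the unique word that first produces them. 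Thus, if we order a basis of $\mb Z[A_R]$ and the basis $\mc S(\tilde H) = \{u_x : x \in A_R\}$ of $K(\tilde H)$ compatibly with length (breaking ties arbitrarily inside each length class), the matrix representing $\tilde\phi$ is block-unitriangular with identity blocks on the diagonal.

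Finally, such a matrix is invertible over $\mb Z$, so $\tilde\phi$ is a $\mb Z$-module isomorphism, hence a ring isomorphism. Surjectivity can also be seen directly: the bijection $x \mapsto u_x$ of \thref{freeness} shows that every element of the $\mb Z$-basis $\mc S(\tilde H)$ arises as a ``leading term'', and the triangular expansion above lets one solve recursively for each $u_x$ as a $\mb Z$-linear combination of the $\tilde\phi(y)$ with $y$ of length $\le |x|$. Injectivity follows from the same triangularity by reading coefficients off longest words first.

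The only real content is \thref{freeness} itself; once the bijection $x \mapsto u_x$ is established, this corollary is a formal consequence. The anti-endomorphism $*$ plays no role in this deduction, although it is pleasant to note that the isomorphism $\tilde\phi$ automatically intertwines the $*$ operations on both sides, since $\phi$ was required to do so on generators and both $*$'s are anti-multiplicative.
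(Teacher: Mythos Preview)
Your argument is correct and is essentially the same as the paper's: the paper phrases the same triangularity observation as ``$\phi$ is a filtered map whose associated graded is an isomorphism, hence $\phi$ is bijective,'' which is exactly your block-unitriangular statement in different language. The one thing to be mildly careful about is that when $R$ is infinite (e.g.\ $R=\mb N$ or $\mb Z$) the length-$N$ blocks are not finite, so speaking of an invertible matrix is slightly informal; but your recursive surjectivity/injectivity arguments do not actually need finiteness of the blocks, only that $\phi(x)-u_x$ lies in the span of $u_y$ with $|y|<|x|$, which you verified.
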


\begin{remark}\relabel{after freeness corollary}

The corollary implies that $K(H(n))$ is isomorphic to $K(H_\infty(m))$, of course ($m,n\ge 2$), since in these two cases we have $R=\mb N$ and $R=\mb Z$. However, the isomorphism appearing in the proof of the corollary will make specific use of these sets $R$, and not just of their cardinality. 

\end{remark}

\seref{4} is concerned with a stronger version of \thref{freeness}, but which does not hold for all $\tilde H$. In order to state it, we need to introduce more notations.  

Let $x\in A_R$. We keep the notation introduced before the statement of \thref{freeness}. Write 
\[
x=r_1r_2\ldots r_n,
\]
where each $r_i$ is one of the letters $\alpha_r$, $r\in R$. Denote by $I(x)$ the set of those $i\in\ol{1,n-1}$ for which $r_ir_{i+1}$ is either of the form $\alpha_r\alpha_{r+1}$ or $\alpha_{r+1}\alpha_r$. For each $i\in I(x)$, denote
\[
x_i=r_1r_2\ldots r_{i-1}r_{i+2}\ldots r_n.
\] 

$\phi$ sends $\alpha_r\alpha_{r+1}$ and $\alpha_{r+1}\alpha_r$ to modules of the form $uu^*$ and respectively $u^*u$ for $u\in K(\tilde H)$, and both of these are $\ge 1$ in $K(\tilde H)$. In conclusion, we get $1\le \phi(r_ir_{i+1})$, and hence $\phi(x_i)\le\phi(x)$ for every $i\in I(x)$. Denote 
\[
u'_x=\phi(x)-\bigvee_{i\in I(x)}\phi(x_i). 
\] 
It's clear that $u'_x\ge u_x$. Our result is the following:

\begin{theorem}\thlabel{maximal freeness}

(a) Suppose $\tilde H$ is not of the form $H_1(F)$. Then, with the notations used above, we have $u'_x=u_x$ for every $x\in A_R$, and hence $x\mapsto u'_x$ is a bijection between $A_R$ and $\mc S(\tilde H)$.

(b) For $\tilde H=H_1(F)$, the statement in (a) is true if and only if $\tilde H$ is cosemisimple.  

\end{theorem}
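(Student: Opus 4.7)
The inequality $u_x \le u'_x$ is immediate from the definitions: since each $x_i$ has length $|x|-2 < |x|$, every simple appearing in $\phi(x_i)$ belongs to $\mc S''$, so $\bigvee_{i \in I(x)} \phi(x_i) \le \phi(x) - u_x$. The substance of the theorem is the reverse inequality, which I would attack by induction on $|x|$.

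For part (a), the inductive step comes down to establishing a Banica-style fusion rule of the shape
\[
\phi(\alpha_r) \cdot u_z \;=\; u_{\alpha_r z} \;+\; \chi_r(z)\, u_{z'},
\]
where $\chi_r(z) = 1$ precisely when $z$ starts with $\alpha_{r+1}$ (writing $z = \alpha_{r+1} z'$) and $0$ otherwise, together with the symmetric formula for right multiplication. Granted this, writing $x = \alpha_r y$ and applying the inductive hypothesis to $\phi(y)$ expresses $\phi(x)$ as $u_x$ plus a sum of simples each contained in some $\phi(x_i)$ with $i \in I(x)$, yielding $u'_x \le u_x$.

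The main obstacle is justifying the fusion rule itself, i.e.\ controlling $\ho(\phi(\alpha_r) u_z, u_w)$ for every simple $u_w$ and ruling out morphisms not arising from the obvious dual-pair contraction. This would be carried out using the defining relations \equref{deffree}, \equref{deffree 2d} together with the explicit basis of $\tilde H$ produced by Bergman's diamond lemma in \seref{2}. The feature distinguishing $\tilde H \ne H_1(F)$ from $H_1(F)$ is that either $R$ is infinite or $|R| = 2d \ge 4$, so the left and right duals of a fundamental corepresentation $\phi(\alpha_r)$ are honestly different (namely $\phi(\alpha_{r+1})$ vs.\ $\phi(\alpha_{r-1})$) and produce no additional forced coincidences beyond those encoded in $I(x)$.

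For part (b), the direction ``cosemisimple $\Rightarrow$ (a)'' is a direct consequence of Bichon's description of the Grothendieck ring of cosemisimple $H_1(F)$ in \cite{Bi4}, whose fusion rules immediately give $u'_x = u_x$. For the converse, I would argue by contrapositive: the plan is to exhibit, for $H_1(F)$ not cosemisimple, a tensor power $x = \alpha_0^k$ of the fundamental for which $\phi(x)$ has a composition factor not predicted by the cosemisimple fusion computation. Since $I(\alpha_0^k) = \emptyset$ (no adjacent dual pairs), one has $u'_{\alpha_0^k} = \phi(\alpha_0^k)$, so a single extra composition factor suffices to witness $u'_x \neq u_x$. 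Producing such a $k$ and isolating the offending simple is the technical heart of this direction, and would rest on the failure of Bichon's fusion rule in the presence of nontrivial Jacobson radical at low tensor powers.
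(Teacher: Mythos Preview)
Your plan for part (a) is a different route from the paper's, and the route itself is not unreasonable, but two pieces are missing. First, your fusion rule is incomplete: $\phi(\alpha_r)u_z$ must pick up a copy of $u_{z'}$ not only when $z$ begins with $\alpha_{r+1}$ but also when it begins with $\alpha_{r-1}$, since $I(x)$ records both patterns $\alpha_r\alpha_{r+1}$ and $\alpha_{r+1}\alpha_r$. Second, and more importantly, you do not actually prove the fusion rule; you only say it ``would be carried out'' using the diamond-lemma basis. But that rule \emph{is} the content of the theorem, so this is where all the work lies. The paper bypasses a direct attack on the fusion rule entirely: it reduces (via \leref{1-step'}, \leref{not cosovereign'}, and Bichon's monoidal equivalence $\mc M^{H(n)}\simeq\mc M^{H(2)}$) to 1-step vectors in $H(2)$, computes $\dim u'_{\bf r}=|{\bf r}|+1$ from \coref{table}, and then pushes through a Hopf map $H(2)\to SL_q(2)$ to find a simple composition factor $m_{|{\bf r}|}$ of the same dimension not present in any $\psi(f_{{\bf r}_c})$, forcing $u'_{\bf r}$ simple. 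The fusion rules (Proposition 4.6 bis) are derived only afterwards, as a consequence.

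Your argument for (b) ``$\Rightarrow$'' is wrong. You propose $x=\alpha_0^k$ as a witness, noting $I(\alpha_0^k)=\emptyset$ so that $u'_{\alpha_0^k}=\phi(\alpha_0^k)=f^k$. But $\alpha_0^k$ is not a 1-step vector (in $\mb Z/2$ we have $0\pm 1=1\ne 0$), and the argument of \leref{1-step} shows directly that $u_{\alpha_0^k}=u_{\alpha_0}^k=f^k$ is simple. Hence $u'_{\alpha_0^k}=u_{\alpha_0^k}$ \emph{always}, regardless of cosemisimplicity; these words can never witness failure. Any discrepancy must occur at a 1-step vector. The paper instead proves the direct implication: assuming $u'_{\bf r}=u_{\bf r}$ for all ${\bf r}$, the key \reref{n_r} (valid only for $R=\mb Z/2$) gives that the span of irreducible monomials of type ${\bf r}$ has dimension $n_{\bf r}^2=\dim C_{\bf r}$, and a filtration argument shows the simple coalgebras $C_{\bf r}$ exhaust $\tilde H$.

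For (b) ``$\Leftarrow$'', citing \cite{Bi4} only covers characteristic zero (as stated in \thref{Bichon}); the paper gives a characteristic-free argument by the same dimension count $\dim C_{\bf r}=n_{\bf r}^2$ combined with \coref{table}.
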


We now take a moment to recall the situation in the literature for the free cosovereign Hopf algebras $H_1(F)$, and make the connection between those results and the theorems stated above. 

In \cite{Ba} the free monoid $A$ on two generators $\alpha,\beta$ is considered, with the involution $*$ used above in the more general situation; here, this involution simply interchanges $\alpha$ and $\beta$.  B\u anic\u a then introduces a new product $\odot$ on the monoid ring $\mb Z[A]$:
\begin{equation}\eqlabel{Banica product}
x\odot y=\sum_{x=ag,y=g^*b}ab,\ x,y\in A.
\end{equation}
It is shown that this is indeed an associative product, and moreover, $(\mb Z[A],\odot)$ is again the free ring generated by $\alpha,\beta$. 

The results in \cite{Bi4} which are relevant here can be rephrased and summarized as follows (\cite[Theorem 1.1,(iii)]{Bi4}):

\begin{theorem}\thlabel{Bichon}

Assume $k$ has characteristic zero and $\tilde H=H_1(F)$ is cosemisimple. Then, the map $(\mb Z[A],\odot)\to K(\tilde H)$ defined by sending $\alpha$ and $\beta$ to $f$ and $f^*$ respectively is an isomorphism of rings with involution, and induces a bijection of $A$ with the set of isomorphism classes of irreducible corepresentations. 

\end{theorem}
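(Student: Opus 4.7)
The plan is to deduce the theorem by combining Theorem \thref{maximal freeness}(b) with Banica's result that $(\mb Z[A], \odot)$ is itself a free ring on $\alpha, \beta$. Under the cosemisimplicity hypothesis, Theorem \thref{maximal freeness}(b) yields the bijection $x \mapsto u'_x$ from $A$ onto $\mc S(\tilde H)$, making $\{u'_x : x \in A\}$ a $\mb Z$-basis of $K(\tilde H)$. Together with Corollary \coref{freeness} (which exhibits $K(\tilde H)$ as the free ring on $f, f^*$), this produces a ring isomorphism $\Phi : (\mb Z[A], \odot) \to K(\tilde H)$ uniquely determined by $\alpha \mapsto f$ and $\beta \mapsto f^*$.

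The substantive task is to check $\Phi(x) = u'_x$ for all $x \in A$, which I would prove by induction on $|x|$ via a pair of matching triangularity identities. Iterating the base case $r \odot w = rw + w'$ (with the extra summand $w'$ present iff $w = r^* w'$) gives, for $x = r_1 \cdots r_n$,
\[
r_1 \odot r_2 \odot \cdots \odot r_n = x + \sum_{|y| < n} c_y\, y, \qquad c_y \in \mb Z_{\ge 0},
\]
in $(\mb Z[A], \odot)$. Dually, the definition $u'_x = \phi(x) - \bigvee_{i \in I(x)} \phi(x_i)$, together with the inductive fact that each $\phi(x_i)$ (with $|x_i| = n-2$) expands non-negatively in the $u'_y$-basis for shorter $y$, yields
\[
\phi(x) = u'_x + \sum_{|y| < n} d_y\, u'_y, \qquad d_y \in \mb Z_{\ge 0},
\]
in $K(\tilde H)$. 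Applying $\Phi$ to the first identity produces $\phi(x)$ on the left, while the inductive hypothesis $\Phi(y) = u'_y$ for $|y| < n$ converts the right side to $\Phi(x) + \sum c_y u'_y$; comparing with the second identity and cancelling common tail terms forces $\Phi(x) = u'_x$.

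Compatibility of $\Phi$ with the involutions reduces to the generators: $\alpha^* = \beta$ maps to $f^* = \Phi(\alpha)^*$, and $\beta^* = \alpha$ maps to $f = f^{**} = \Phi(\beta)^*$, the last equality using that the squared antipode acts on the fundamental coalgebra of $H_1(F)$ as conjugation by $F$; since both involutions are anti-endomorphisms, this generator-level agreement propagates to all of $A$. The main obstacle I anticipate is the $K(\tilde H)$-side triangularity: verifying that $\bigvee_{i \in I(x)} \phi(x_i)$ decomposes non-negatively over the $u'_y$-basis with $|y| < n$. This leans on both cosemisimplicity (so that $K_+(\tilde H)$ is the non-negative span of the classes of simples) and the inductive content of Theorem \thref{maximal freeness}(b); the $\odot$-side manipulation, by contrast, is purely combinatorial.
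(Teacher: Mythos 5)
Your overall strategy --- deducing the theorem from \thref{maximal freeness}(b) together with B\u anic\u a's freeness of $(\mb Z[A],\odot)$ and \coref{freeness} --- is the right one: the paper itself only quotes this statement from \cite{Bi4}, but its own machinery recovers it along exactly these lines. The isomorphism-of-rings part, the involution check on generators (including $f^{**}=f$ from $S^2=\mathrm{conj}_F$), and the identification of $\{u'_x\}$ with the set of simples via \thref{maximal freeness}(b) are all fine. The gap is in the inductive step establishing $\Phi(x)=u'_x$. Your two triangular identities give
\[
\phi(x)=\Phi(x)+\sum_{|y|<n}c_y\,u'_y \qquad\text{and}\qquad \phi(x)=u'_x+\sum_{|y|<n}d_y\,u'_y,
\]
hence $\Phi(x)-u'_x=\sum_{|y|<n}(d_y-c_y)\,u'_y$. ``Cancelling common tail terms'' requires $c_y=d_y$ for every $y$, and nothing in your argument supplies this: the $c_y$ are produced by the combinatorics of iterated single-letter $\odot$-products, the $d_y$ by the recursive definition of $u'_x$ via suprema, and a priori $\Phi(x)$ could differ from $u'_x$ by an arbitrary integer combination of shorter basis elements without violating either identity (two unitriangular changes of basis need not coincide). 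Positivity cannot rescue the step either, since you do not know in advance that $\Phi(x)$ lies in $K_+$, let alone that it is the class of a simple comodule, so the two expressions cannot be compared as decompositions of $\phi(x)$ into simples. The equality $c_y=d_y$ is precisely the combinatorial heart of the matter, and your argument assumes it rather than proving it.

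The clean way to close the gap using the paper's results is Proposition 4.6 bis: the identity $u'_{\bf r}u'_{\bf s}=u'_{{\bf r}\odot{\bf s}}$ says that $x\mapsto u'_x$ is itself a unital ring homomorphism from $(\mb Z[A],\odot)$ to $K(\tilde H)$; it sends $\alpha\mapsto f$ and $\beta\mapsto f^*$, so by freeness of $(\mb Z[A],\odot)$ on $\alpha,\beta$ it must coincide with $\Phi$, giving $\Phi(x)=u'_x$ for all $x$ with no induction at all. If you wish to avoid invoking that proposition, you must reprove its content --- matching the terms of ${\bf r}\odot{\bf s}$ with the ${\bf rs}$-configurations that do not arise from products of sub-configurations --- which is exactly the verification your ``cancellation'' step silently presupposes.
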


Note that this generalizes \cite[Th\'eor\`eme 1 (i)]{Ba}, and so includes the corepresentation theory of Wang and Van Daele's universal compact quantum groups mentioned in the introduction. Bichon actually determines exactly when a universal cosovereign Hopf algebra is cosemisimple in characteristic zero, but we do not make use of that result here. 

It is not difficult to see that part (b) of \thref{maximal freeness} (in characteristic zero) is, in fact, another way of stating \thref{Bichon}. 

For $\tilde H=H(n)$, \thref{maximal freeness} says, essentially, that the Grothendieck ring $K(\tilde H)$ is generated as a ring with anti-endomorphism by the fundamental corepresentation $f$, and the relations satisfied by the generators $f,f^*,f^{**}$, etc. are precisely those imposed by the fact that $\mc M^H$ is a left rigid monoidal category, and nothing more. In other words, $K(\tilde H)$ is ``as free as possible'' on the dual iterates $f,f^*,f^{**}$, etc. of $f$. We refer to this situation as ``maximal freeness'', hence the title of \seref{4}. 

The meaning of \thref{maximal freeness} for $\tilde H=H_\infty(n)$ or $\tilde H=H_d(F)$ is similar: in the first case $K(\tilde H)$ is maximally free on the iterates $*^r(f)$, $r\in R=\mb Z$ under the constraints that $\mc M^H$ be a rigid (both left and right) monoidal category, while for $\tilde H=H_d(F)$, in the good cases (i.e. when either $d>1$ or $d=1$ and $H_1(F)$ is cosemisimple), $K$ is maximally free on the dual iterates of $f$ under the constraint that $\mc M^H$ be a rigid monoidal category for which the $2d$'th power of the dual is naturally isomorphic to the identity functor.    

\section{Putting the diamond lemma to good use}\selabel{2}

As announced in the introduction, in this section we will look at the Hopf algebras $\tilde H$ in more detail, and bases over $k$ will be constructed for them using Bergman's diamond lemma. We use the results and language in \cite{Be} freely, and refer to that paper for the necessary background and terminology.

Typically, we won't go through the actual verification of the fact that the ambiguities we get (\cite{Be}) are resolvable. Instead, for the more formidable ambiguities, we give an argument which simplifies the situation considerably and makes the verification itelf more or less trivial.  

A basis for $H(n)$ was constructed by Nichols in \cite{Ni}, and the technique was adapted to $H_\infty(n)$ in \cite{Sc}. We stated in \cite{Ch} that an analogous approach works for what here would be called $H_d(I_n)$. Because the result will be different here, we recall only that the bases used in these papers consisted of all words in the generators $x^r_{ij}$ (introduced in the previous section) which contain no subwords of either one of the forms
\[
x^r_{in}x^{r+1}_{jn},\quad x^{r+1}_{ni}x^r_{nj},\quad x^r_{in}x^{r+1}_{jn-1}x^{r+2}_{kn-1} ,\quad x^{r+2}_{ni}x^{r+1}_{n-1j}x^r_{n-1k}, 
\]
for $r$ ranging through $R=R(\tilde H)$. 

Let us now look at $\tilde H=H(n)$, $H_\infty(n)$, or $H_d(F)$, with $F\in GL(n,k)$. The following notation will be useful: bold symbols such as ${\bf r}=(r_1,\ldots,r_k)$ and ${\bf i}=(i_1,\ldots,i_k)$ denote vectors of elements $r_j\in R$ and $i_j\in \ol{1,n}$ respectively. The length of the vector ${\bf r}$ will be denoted by $|{\bf r}|$. $x^{\bf r}_{\bf ij}$ denotes the product $x^{r_1}_{i_1j_1}\ldots x^{r_k}_{i_kj_k}$; $x^{\bf r}_{\bf ij}$ will also occasionally be referred to as a monomial of type ${\bf r}$.  

In order to apply the diamond lemma, we need a collection of reductions, and a semigroup partial order on the monoid $\la \mc X\ra$ freely generated by the set $\mc X$ of symbols $x^r_{ij}$, $r\in R$ and $i,j\in\ol{1,n}$. We take care of the ordering later; the reductions are as follows: 

\begin{equation}\eqlabel{red1}
x^r_{in} x^{r+1}_{jn} \to \delta_{ij}-\sum_{a<n}x^r_{ia} x^{r+1}_{ja},\quad \mbox{r even}
\end{equation}

\begin{equation}\eqlabel{red2}
x^r_{i1} x^{r+1}_{j1} \to \delta_{ij}-\sum_{a>1}x^r_{ia} x^{r+1}_{ja},\quad \mbox{r odd}
\end{equation}

\begin{equation}\eqlabel{red3}
x^{r+1}_{ni} x^r_{nj} \to \delta_{ij}-\sum_{a<n}x^{r+1}_{ai} x^r_{aj},\quad \mbox{r odd}
\end{equation} 

\begin{equation}\eqlabel{red4}
x^{r+1}_{1i} x^r_{1j} \to \delta_{ij}-\sum_{a>1}x^{r+1}_{ai} x^r_{aj},\quad \mbox{r even}
\end{equation}
Here $\delta_{ij}$ is the Kronecker delta, and since $R$ is one of the sets $\mb N$, $\mb Z$ or $\mb Z/2d$, it makes sense to talk about even and odd elements $r\in R$.    

These reductions, with $r$ ranging through the whole set $R$, account for all the relations defining the algebras $H(n)$ and $H_\infty(n)$ (and even $H_d(I_n)$). So by the diamond lemma, in order to conclude that the monomials which contain no subwords as in the left hand sides of \equref{red1} - \equref{red4} form a basis in these cases, it suffices to prove (once the semigroup partial order with the descending chain condition and compatible with the reductions has been found) that all resulting overlap and inclusion ambiguities are resolvable. 

The advantage of this choice of reductions over those in \cite{Ni, Sc, Ch} is the fact that now there is essentially only one ambiguity to resolve (``essentially'' meaning up to interchanging $1$ and $n$, a translation of $R$, etc.). This essentially unique (overlap) ambiguity is $x^r_{in} x^{r+1}_{1n} x^r_{1j}$ for even $r$, and one sees easily that it is indeed resolvable. Hence, we now have a basis for $H(n)$ and $H_\infty(n)$.

In order to treat $H=H_d(F)$, the arbitrary invertible matrix $F$ must be brought into the picture. Recall (\equref{deffree}) that as an algebra, $H$ is generated by the elements $x^r_{ij}$ for $r\in\mb Z/2d=\ol{0,2d-1}$, and $i,j\in\ol{1,n}$, subject to the relations
\[
(X^{r+1})^t=(X^r)^{-1},\ \forall r\in\ol{0,2d-2},
\]
\[
F(X^0)^tF^{-1}=(X^{2d-1})^{-1}.
\]
Here, $X^r$ is the matrix $(x^r_{ij})_{i,j}\in M_n(H)$, and the superscript $^t$ denotes the transpose of an $n\times n$ matrix.   
    
To get reductions which account for all of this, we first make the observation that it suffices to consider the case when $F$ is upper triangular. More precisely, we have an isomorphism $H_d(F)\cong H_d(PFP^{-1})$ for any $P\in GL(n,k)$, and any matrix can be made upper triangular by conjugation (the field is algebraically closed!). 

The claim about the isomorphism is proven in \cite{Bi2} for $d=1$, i.e. for the free cosovereign Hopf algebras. It suffices to send $X^0$ from $H_d(PFP^{-1})$ to $(P^t)^{-1}X^0P^t$ from $H_d(F)$, and this is easily seen to extend to a Hopf algebra isomorphism for the Hopf algebra structures described in the previous section. Hence, from now on, whenever $H_d(F)$ comes up, we assume that $F$ is upper triangular. With this assumption in place, we keep the reductions \equref{red1} - \equref{red4} for $r=\ol{0,2d-2}$, and add the two reductions

\begin{equation}\eqlabel{red5}
x^{2d-1}_{i1} x^0_{j1} \to F_{11}^{-1}F_{jj}\left(\delta_{ij}-\sum_{(l,p,u)\ne (1,1,j)} F_{lp}(F^{-1})_{uj}x^{2d-1}_{il} x^0_{up}\right)
\end{equation}

\begin{equation}\eqlabel{red6}
x^0_{ni} x^{2d-1}_{nj} \to F_{ii}^{-1}F_{nn}\left(\delta_{ij}-\sum_{(p,u,l)\ne (i,n,n)}x^0_{up} x^{2d-1}_{lj}\right)
\end{equation}

We have postponed tackling the issue of the semigroup partial order on $\la \mc X\ra$ until now because we would like to find such an order which is compatible with all of our reductions \equref{red1} - \equref{red6} at once (in addition to having the descending chain condition). For our purposes, the following works.

First, words in the $x^r_{ij}$ are ordered according to their length (that is, shorter words are smaller). Then, among words of the same length, we only compare pairs of the form $x^{\bf r}_{\bf ij}$, $x^{\bf r}_{\bf i'j'}$ (i.e. with the same vector ${\bf r}$). So consider such a pair, say
\[
x^{\bf r}_{\bf ij}=x^{r_1}_{i_1j_1}\ldots x^{r_k}_{i_kj_k},\quad x^{\bf r}_{\bf i'j'}=x^{r_1}_{i'_1j'_1}\ldots x^{r_k}_{i'_kj'_k}.
\]  

Let $\ell$ be the smallest index for which the pairs $(i_\ell,j_\ell)$ and $(i'_\ell,j'_\ell)$ are different. Then, the order between our monomials $x^{\bf r}_{\bf ij}$ and $x^{\bf r}_{\bf i'j'}$ is the same as the order between the two-term monomials $x^{\bf s}_{\bf uv}$ and $x^{\bf s}_{\bf u'v'}$ respectively, where 
\[
{\bf s}=(r_\ell,r_{\ell+1}),
\]
\begin{align*}
\qquad \qquad \qquad \qquad \qquad \qquad 
{\bf u}  &=(i_\ell,i_{\ell+1}),    &  {\bf v}    &=(j_\ell,j_{\ell+1}), 
\qquad \qquad \qquad \qquad \qquad \qquad   \\
\qquad \qquad \qquad \qquad \qquad \qquad 
{\bf u'} &=(i'_\ell,i'_{\ell+1}),  &  {\bf v'}   &=(j'_\ell,j'_{\ell+1}). 
\qquad \qquad \qquad \qquad \qquad \qquad 
\end{align*}
The order is undefined if $\ell=k$, i.e. the monomials are incomparable in our partial order in this case. 

The above is clearly a semigroup partial order for any partial order whatsoever on the two-term monomials, so it suffices to describe that. We simply make the two-term monomials on the left hand side of each of \equref{red1} - \equref{red6} greater than any two-term monomial in the right hand side of the same reduction; it is not difficult to see that this can be extended to a partial order on the two-term monomials. 

For example, if ${\bf r}=(r,r\pm 1)$ and $r$ is even, then the order can be defined as follows:  
\[
x^{\bf r}_{\bf ij}>x^{\bf r}_{\bf i'j'}\quad \mbox{if}\quad {\bf i}=(n,n)\ne {\bf i'},
\]
\[
x^{\bf r}_{\bf ij}>x^{\bf r}_{\bf i'j'}\quad \mbox{if}\quad {\bf i}=(n,n)={\bf i'},\quad {\bf j'}\ne (n,n),\quad {\bf ij}<{\bf i'j'}\ \mbox{lexicographically},
\]
\[
x^r_{in}x^{r\pm 1}_{jn}>x^r_{ia}x^{r\pm 1}_{ja},\ \forall a<n,\ i,\ j. 
\]
Here, ${\bf ij}$ is simply the concatenation of the vectors ${\bf i}$ and ${\bf j}$. In checking that this works, one must make use of the fact that our matrix $F$ is now assumed to be upper triangular. A similar arrangement works for ${\bf r}=(r,r\pm 1)$ with odd $r$, and this is enough for our purposes. 

Apart from the ambiguities resulting from the reductions \equref{red1} - \equref{red4} (for $r=\ol{0,2d-2}$), which are easily checked to be resolvable, we must also consider the ambiguities of the form $x^0_{nj} x^{2d-1}_{n1} x^0_{i1}$ and $x^{2d-1}_{i1} x^0_{n1} x^{2d-1}_{nj}$. Because of the complicated form of the reductions \equref{red5}, \equref{red6}, it is much more cumbersome to check the resolvability of these. We will make use of a trick to reduce \equref{red5} and \equref{red6} to the case when $F$ is diagonal; this simplifies the task of checking the resolvability significantly, and we leave that task to the reader. 

The trick alluded to in the previous paragraph is of the following nature: (1) first, we would like to conclude that the desired resolvability depends only on the conjugacy class of $F$ in the group $T(n,k)$ of upper triangular $n\times n$ matrices; (2) next, we observe that it suffices to prove the resolvability only for $F$ in a Zariski dense subset of $T(n,k)$. These two steps would indeed reduce the checking to the case when $F$ is diagonal, because we can take our Zariski dense set to be that of diagonalizable upper triangular matrices.

To prove step (1), notice that by the diamond lemma, the resolvability can be regarded as a statement about the dimension of the span of the $x^{\bf r}_{\bf ij}$ in $H_d(F)$, where ${\bf r}$ is either $(0,2d-1,0)$ or $(2d-1,0,2d-1)$. But by the argument used to prove the isomorphism $H_d(F)\cong H_d(PFP^{-1})$, this dimension depends only on the conjugacy class of $F$ in $T(n,k)$.

For step (2), let us focus on resolving $x^0_{nj} x^{2d-1}_{n1} x^0_{i1}$ (the other ambiguity being essentially the same). We can either apply \equref{red6} to the first two factors and then \equref{red5} to every term in the resulting sum for which it applies, or apply \equref{red5} to the last two factors and then \equref{red6} to all the terms to which it applies in the resulting sum. The aim is to prove that if for a Zariski dense subset of $T(n,k)$ the resulting expressions are identical, then they are identical for all $F$. But this is clear: the resulting expressions are linear combinations of terms of the form $x^{\bf r}_{\bf ij}$ for ${\bf r}=(0,2d-1,0)$, and the coefficients of each such term are regular functions defined on the algebraic variety $T(n,k)$; if these coefficients coincide on a Zariski dense subset of $T(n,k)$, they coincide everywhere by continuity.

We now summarize the conslusions of this section:

\begin{proposition}\prlabel{diamond}

(a) For $\tilde H=H(n)$ or $H_\infty(n)$, the diamond lemma is applicable to the reductions \equref{red1} - \equref{red4} (for $r\in R(\tilde H)$), so the words in $x^r_{ij}$ containing no subwords as in the left hand sides of those reductions form a basis for $\tilde H$.  

(b) Let $F\in T(n,k)$. For $\tilde H=H_d(F)$, the same conclusion as in (a) holds, with the reductions \equref{red1} - \equref{red4}, $r=\ol{0,2d-2}$ and \equref{red5}, \equref{red6}. 

\end{proposition}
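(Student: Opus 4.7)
The plan is to apply Bergman's diamond lemma directly to the setup already described: the reductions \equref{red1}--\equref{red6} together with the two-step partial order on the free monoid $\la \mc X\ra$ (first by length, then, among monomials of equal length and common type ${\bf r}$, by the lexicographic rule specified on the relevant two-term factor). My first task is to verify that this really is a semigroup partial order with the descending chain condition and that it is compatible with all of the reductions. The descending chain condition and length-compatibility are immediate since each reduction weakly decreases length, and compatibility of the equal-length comparison with each reduction reduces by construction to a check on two-term monomials; this holds by design, because the local order on two-term monomials of each fixed type was defined so that the left-hand side of the relevant reduction dominates every two-term monomial on the right-hand side, and a routine verification shows that this local prescription extends to a genuine partial order.

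The next step is to enumerate the ambiguities. There are no inclusion ambiguities, since no left-hand side is a subword of another. The overlap ambiguities split into two groups. The first, present for every $\tilde H$, arises from \equref{red1}--\equref{red4} alone: up to the symmetries $r\mapsto r+1$ and $1\leftrightarrow n$ there is essentially a single ambiguity $x^r_{in}x^{r+1}_{1n}x^r_{1j}$ (even $r$), and its resolution is a direct calculation using both halves of \equref{deffree}. This already settles part (a). The second group, present only for $\tilde H=H_d(F)$, consists of the overlaps $x^0_{nj}x^{2d-1}_{n1}x^0_{i1}$ and $x^{2d-1}_{i1}x^0_{n1}x^{2d-1}_{nj}$ that mix \equref{red5}, \equref{red6} (together with analogous overlaps between \equref{red5}, \equref{red6} and \equref{red1}--\equref{red4} at the boundary $r=2d-2$, handled in the same way).

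The main obstacle is the $F$-dependent family from the second group: direct symbolic verification is unpleasant because of the coefficients in \equref{red5}, \equref{red6}. Here the strategy is the Zariski density argument indicated in the text. First, the isomorphism $H_d(F)\cong H_d(PFP^{-1})$ given by $X^0\mapsto (P^t)^{-1}X^0P^t$ shows that, for each fixed type ${\bf r}\in\{(0,2d-1,0),(2d-1,0,2d-1)\}$, the dimension of the span of the $x^{\bf r}_{\bf ij}$ in $H_d(F)$ depends only on the conjugacy class of $F$ in $T(n,k)$; by the diamond lemma, this dimension is precisely what detects resolvability of the ambiguities in question, so being resolvable is a conjugacy-class invariant. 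Second, the difference of the two expressions obtained by reducing either end of such an ambiguity is a $k$-linear combination of monomials whose coefficients are rational functions of the entries of $F$ that are regular on $T(n,k)$; vanishing of such a function on a Zariski-dense subset forces vanishing everywhere. Taking this dense subset to be the diagonalizable upper-triangular matrices and using conjugation invariance, it suffices to verify resolvability when $F$ is diagonal. In that case \equref{red5}, \equref{red6} collapse to two-term shapes essentially the same as \equref{red1}--\equref{red4}, and the resolvability is a routine computation parallel to part (a), completing the proof of (b).
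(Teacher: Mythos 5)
Your proposal follows the paper's own argument essentially verbatim: the same length-then-local two-term partial order, the same essentially unique overlap $x^r_{in}x^{r+1}_{1n}x^r_{1j}$ (even $r$) for part (a), and for part (b) the same reduction of the $F$-dependent ambiguities to diagonal $F$ by combining conjugacy-invariance of resolvability (read off as a dimension statement via the diamond lemma) with Zariski density of the diagonalizable matrices in $T(n,k)$ and regularity of the coefficients. This is the paper's approach, correctly reproduced.
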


The expansion of an element of $\tilde H$ as a linear combination of the basis given here will be referred to as the {\it standard form} of the element. Similarly, the standard form of an element of $\tilde H\otimes\tilde H$ is its expansion as a linear combination of tensor products of reduced monomials. The terms reducible/irreducible for monomials $x^{\bf r}_{\bf ij}$ as above always refer to the reductions \equref{red1} - \equref{red6}. 

Finally, note that $\tilde H$ is filtered by the non-negative integers, with $\tilde H_k$ being the span of the monomials $x^{\bf r}_{\bf ij}$ for $|{\bf r}|\le k$. 

\section{Freeness}\selabel{3}

In this section we prove \thref{freeness} and its consequence, \coref{freeness}. Let us take care of the corollary first, assuming the theorem is proven. 

We introduce some more notation first: given $r\in R=R(\tilde H)$, $f_r\in K=K(\tilde H)$ denotes the comodule of $\tilde H$ corresponding to the matrix coalgebra $X^r$. Similarly, given a vector ${\bf r}=(r_1,\ldots,r_k)$ with entries in $R$, $f_{\bf r}$ denotes the product $f_{r_1}\ldots f_{r_k}$. Similarly, $X^{\bf r}$ denotes the product of the coalgebras $X^{r_i}$; it is the coalgebra $C(f_{r_i})$ corresponding to the tensor product of the comodules $f_{r_i}$ (in the same order $r_1,r_2,\ldots$).  

Since the words $x\in A_R$ are clearly in one-to-one correspondence with the vectors ${\bf r}$ with entries in $R$, we may denote the elements $u_x,u'_x$ introduced in \seref{1} by $u_{\bf r}$ and $u'_{\bf r}$ respectively (for the vector ${\bf r}$ corresponding to $x$).

\renewcommand{\proofname}{Proof of \coref{freeness}}
\begin{proof}

Recall the morphism $\phi:\mb Z[A_R]\to K=K(\tilde H)$ of rings endowed with an anti-endomorphism introduced in \seref{1}. Both the free unital ring $\mb Z[A_R]$ on $R$ and the Grothendieck ring $K$ are filtered: the former by the length of the words on $R$, and the latter by setting, $K_n$ equal to the linear combination of those simple comodules which are $\le f_{\bf r}$ for some vector $\bf r\subset R$ of length $\le n$ for each non-negative integer $n$ (remember that there is an order on $K$, with $K_+$ as a positive cone).   

The map $\phi$ from \seref{1} preserves the filtration, and \thref{freeness} says precisely that the induced graded map between associated graded rings is an isomorphism. But this implies that $\phi$ itself is bijective, and we are done. \end{proof}
\renewcommand{\proofname}{Proof}

\begin{remark}

The corollary generalizes \cite[Corollary 5.5]{Bi4}, which consists of the corresponding statement for the cosemisimple universal cosovereign Hopf algebras $H_1(F)$ in characteristic zero. 

\end{remark}

Before going into the proof of the theorem, we make several preliminary observations on the problem. One of these is the following reformulation:

\begin{lemma}\lelabel{freeness}

\thref{freeness} is equivalent to the fact that the elements $u_{\bf r}\in K(\tilde H)$ appearing in its statement are simple. 

\end{lemma}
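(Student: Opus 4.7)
The forward direction is tautological: if $x\mapsto u_x$ is a bijection $A_R\to\mc S(\tilde H)$, then each $u_{\bf r}$ is, by definition, an element of $\mc S(\tilde H)$---a single simple comodule in $K(\tilde H)$.

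For the converse, the plan is to assume each $u_{\bf r}$ is simple and show that $\psi:{\bf r}\mapsto u_{\bf r}$ is both surjective and injective as a map $A_R\to\mc S(\tilde H)$.

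Surjectivity will rest on the coalgebraic fact that every simple subcoalgebra $C\subset\tilde H$ is contained in some $X^{\bf r}$. I would establish this by writing a multiplicative matrix for $C$ in the standard form of \prref{diamond}: only finitely many basis monomials appear, each lying in some $X^{{\bf r}_m}$, so $C\subset\sum_m X^{{\bf r}_m}$, and one then uses the length filtration on $\tilde H$ to collapse this to a single $X^{\bf r}$. Granted this, for any simple $s\in\mc S(\tilde H)$ I pick ${\bf r}$ of minimal length with $C(s)\subset X^{\bf r}$; minimality forces $s\in\mc S'({\bf r})$, so $s$ is a constituent of $u_{\bf r}$, and the hypothesis that $u_{\bf r}$ is a single simple forces $u_{\bf r}=s$.

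For injectivity, if $u_{\bf r}=u_{{\bf r}'}=s$, a length comparison settles the case $|{\bf r}|\ne|{\bf r}'|$: otherwise $s$ would appear in a strictly shorter $\phi$, contradicting $s\in\mc S'$ of the longer word. In the case $|{\bf r}|=|{\bf r}'|=k$, I would induct on $k$, using lower-level bijectivity to expand $\phi({\bf r})=u_{\bf r}+\sum_{|y|<k}m_{{\bf r},y}u_y$ and similarly for ${\bf r}'$; the resulting relation $\phi({\bf r})-\phi({\bf r}')\in K_{k-1}$ would then have to be reconciled with the ring structure of $K(\tilde H)$ in order to force ${\bf r}={\bf r}'$. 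I expect this same-length injectivity to be the main obstacle: ruling out distinct length-$k$ words collapsing to the same simple requires careful bookkeeping of how the products $\phi({\bf r})$ decompose, whereas surjectivity and the length-mismatch part of injectivity are comparatively routine once the standard form furnished by \prref{diamond} is in hand.
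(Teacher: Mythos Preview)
Your forward direction and surjectivity argument are correct and match the paper's approach (the paper phrases surjectivity via $u\le f_{\bf r}$ for minimal $|{\bf r}|$, which is the same thing). The length-mismatch half of injectivity is also fine.

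The gap is exactly where you flag it: same-length injectivity. Your proposed route through ``reconciling $\phi({\bf r})-\phi({\bf r}')\in K_{k-1}$ with the ring structure'' is vague, and there is no reason to expect the ring structure alone to separate words of the same length---after all, $K(\tilde H)$ has not yet been shown to be free at this point in the argument. The paper resolves this not via $K$-theoretic bookkeeping but by going back to the standard basis of \prref{diamond}. The observation is: since $u_{\bf r}\le f_{\bf r}$, the matrix coalgebra $C_{\bf r}$ sits inside $X^{\bf r}$, so every element of $C_{\bf r}$, written in standard form, is a combination of irreducible monomials of type ${\bf r}$ together with strictly shorter monomials. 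On the other hand, by the very definition of $u_{\bf r}$ (it does not appear in any $\phi(y)$ with $|y|<|{\bf r}|$), $C_{\bf r}$ is not contained in the span of the shorter monomials, so some element of $C_{\bf r}$ genuinely contains a top-length monomial, and that monomial is necessarily of type ${\bf r}$. Since the type of an irreducible monomial is read off from its upper indices, this pins down ${\bf r}$ uniquely from $C_{\bf r}$, and injectivity follows immediately---for equal and unequal lengths at once.

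So the missing idea is simply to use the diamond-lemma basis a second time, for injectivity as well as surjectivity, rather than attempting an inductive Grothendieck-ring argument.
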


\begin{proof}

That the $u_{\bf r}$ are simple is part of the statement of \thref{freeness}, so we only need the opposite implication. Hence, we now assume that all $u_{\bf r}$ are simple. 
   
Since the Hopf algebra $\tilde H$ is the sum of the subcoalgebras $X^{\bf r}$ (for vectors ${\bf r}$ with entries in $R$), it follows that its comodules are subcomodules of the tensor products (represented by) the $f_{\bf r}$. Now consider (the representative of) a simple comodule $u\in K=K(\tilde H)$. We have just noticed that we must have $u\le f_{\bf r}$ in $K$ for some vector ${\bf r}$; choose such an ${\bf r}$ of the smallest length possible. It then follows from the definition of the $u_{\bf s}$'s that $u=u_{\bf r}$; consequently, $\phi$ is a surjection of $A_R$ on $\mc S(\tilde H)$. 

On the other hand, again from the definition of $u_{\bf r}$, it follows that the elements of the corresponding matrix subcoalgebra of $\tilde H$, in their standard form, contain reduced monomials of type ${\bf r}$ (apart from those of type ${\bf s}$ for $|\bf s|<|{\bf r}|$). But this immediately implies that the $u_{\bf r}$ are all different, so $\phi$ is also injective.    
\end{proof}

The previous lemma allows us to focus on proving that $u_{\bf r}$ are all simple. In order to state the next preliminary result, we introduce the following terminology: a vector ${\bf r}=(r_1,\ldots,r_k)\subset R$ is said to be a {\it 1-step} vector if $r_{i+1}=r_i\pm 1$ for all $i$. The claim is now the following:

\begin{lemma}\lelabel{1-step}

If $u_{\bf r}$ is simple for every 1-step vector ${\bf r}\subset R$, then all $u_{\bf r}$ are simple. 

\end{lemma}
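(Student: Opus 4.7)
The plan is strong induction on $|{\bf r}|$. The cases $|{\bf r}|\le 1$ are immediate: $u_{\bf r}$ is the trivial comodule or the fundamental $f_{r_1}$, both simple. For the inductive step, if ${\bf r}$ is 1-step the hypothesis applies directly. Otherwise, pick an index $i$ with $r_{i+1}\ne r_i\pm 1$ and split ${\bf r}={\bf s}\cdot{\bf t}$ at that position, so ${\bf s}=(r_1,\ldots,r_i)$ and ${\bf t}=(r_{i+1},\ldots,r_{|{\bf r}|})$. Since $|{\bf s}|,|{\bf t}|<|{\bf r}|$, the induction hypothesis gives that $u_{\bf s}$ and $u_{\bf t}$ are simple.

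The main claim is that $u_{\bf s}\cdot u_{\bf t}$ is a simple comodule and coincides with $u_{\bf r}$ in $K(\tilde H)$. The identification is the easy part: expanding $f_{\bf r}=f_{\bf s}\cdot f_{\bf t}$ using $f_{\bf s}=u_{\bf s}+\sum_j n_j S_j$ (each $S_j$ appearing in some shorter $f_{{\bf s}'}$), and similarly for $f_{\bf t}$, every cross-term is bounded above by a product $f_{{\bf s}'}\cdot f_{{\bf t}'}$ with $|{\bf s}'|+|{\bf t}'|<|{\bf r}|$. Thus all its simple constituents appear already in words strictly shorter than ${\bf r}$ and contribute nothing to $u_{\bf r}$; only $u_{\bf s}\cdot u_{\bf t}$ can.

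Simplicity of the product I would establish via the standard criterion that a comodule $M$ is simple if and only if $\dim C(M)=(\dim M)^2$ (for a proper nonzero subcomodule $N\subsetneq M$, the nonzero subspace $N^{\perp}\otimes N\subset M^*\otimes M$ lies in the kernel of the matrix-coefficient map $M^*\otimes M\to\tilde H$, giving $\dim C(M)<(\dim M)^2$). With $D_{\bf s}=C(u_{\bf s})$ and $D_{\bf t}=C(u_{\bf t})$ of dimensions $d_{\bf s}^2$ and $d_{\bf t}^2$, we have $C(u_{\bf s}\otimes u_{\bf t})=D_{\bf s}\cdot D_{\bf t}$, so simplicity reduces to injectivity of the multiplication map $D_{\bf s}\otimes D_{\bf t}\to\tilde H$. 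The crucial point is that $r_{i+1}\ne r_i\pm 1$ means none of the reductions \equref{red1}--\equref{red6} can be applied across the ${\bf s}$--${\bf t}$ junction (all of them have adjacent superscripts differing by exactly $1$), so products of reduced monomials coming from $X^{\bf s}$ and $X^{\bf t}$ remain reduced at the junction and the claim follows from the Bergman basis of Proposition~\ref{pr:diamond}. The same observation shows $D_{\bf s}\cdot D_{\bf t}$ contains reduced monomials of length exactly $|{\bf r}|$, so the simple product has minimal length $|{\bf r}|$ and is genuinely $u_{\bf r}$ rather than some shorter $u_{\bf q}$.

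The main obstacle is making the junction argument precise at the level of standard forms: elements of $D_{\bf s}$ a priori involve reduced monomials of various shorter types, so one must carefully track the terminal superscripts appearing in their standard-form expansions and check that no accidental reduction arises after concatenation with an element of $D_{\bf t}$. I would handle this by working with the length filtration on $\tilde H$ introduced at the end of Section~\ref{se:2} and passing to leading terms in the associated graded, where the relevant "non-interaction" of ${\bf s}$ and ${\bf t}$ across the junction becomes transparent.
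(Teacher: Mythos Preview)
Your proposal is correct and follows essentially the same route as the paper: induction on $|{\bf r}|$, splitting at a non-1-step junction, and showing that the multiplication map $C(u_{\bf s})\otimes C(u_{\bf t})\to\tilde H$ is injective because none of the reductions \equref{red1}--\equref{red6} applies across the junction. Your ``obstacle'' paragraph and its resolution via leading terms in the associated graded is exactly what the paper does when it observes that the projection of $C_i$ onto the span of type-${\bf r}_i$ monomials (killing all shorter monomials) is injective; on $X^{{\bf r}_i}$ this projection \emph{is} the passage to the top graded piece, so the two formulations coincide.
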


\begin{proof}

We prove (under the hypothesis of the lemma) that all $u_{\bf r}$ are simple by induction on the length of ${\bf r}$. Vectors of length $1$ (or $0$, i.e. the empty vector) are by definition 1-step, so the base case of the induction is taken care of. Now fix a vector ${\bf r}$, and assume the statement is proven for all shorter vectors. 

If ${\bf r}$ is 1-step, there is nothing to prove. Otherwise, we can write ${\bf r}$ as a concatenation ${\bf r}_1{\bf r}_2$, where ${\bf r}_1$ and ${\bf r}_2$ are vectors such that the last entry $r_1$ of ${\bf r}_1$ and the first entry $r_2$ of ${\bf r}_2$ satisfy $r_2\ne r_1\pm 1$. 

By the induction hypothesis, the coalgebras $C_i$, $i=1,2$ corresponding respectively to $u_{{\bf r}_i}$ are matrix coalgebras; since the intersection of $C_i$ with the matrix coalgebra $X^{\bf s}$ for ${\bf s}$ shorter than ${\bf r}_i$ is trivial, the projection of $C_i$ on the span of the monomials of type ${\bf r}_i$ (respectively) obtained by sending all other monomials to zero is injective. But the form of the basis in \prref{diamond} makes it clear that the product of two irreducible monomials of types ${\bf r}_1$ and respectively ${\bf r}_2$ is again irreducible. This, together with the previous observation, implies that the multiplication map from the tensor product $C_1\otimes C_2$ to the product $C=C_1C_2$ inside $\tilde H$ is an isomorphism, and hence that (a) $u_{\bf r}=u_{{\bf r}_1}u_{{\bf r}_2}$, and (b) $u_{\bf r}$ is simple, with matrix coalgebra $C$. This completes the induction step. 
\end{proof}

In the proof of \thref{freeness}, we will deal separately with the universal cosovereign Hopf algebras $H_1(F)$. For the other cases, $\tilde H=H(n)$, $H_\infty(n)$ or $H_d(F)$ for some $d>1$, the following observation will be useful:

\begin{lemma}\lelabel{not cosovereign}

If \thref{freeness} holds for $\tilde H=H(n)$, then it holds for $\tilde H=H_\infty(n)$ or $\tilde H=H_d(F)$, $d>1$.  

\end{lemma}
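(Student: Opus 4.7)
The plan is to invoke \leref{freeness} and reduce to showing every $u_{\bf r}^{\tilde H}$ is simple, then build such simples by pushing forward simple subcoalgebras of $H(n)$ along the canonical Hopf-algebra map
\[
\pi : H(n) \to \tilde H
\]
obtained from the universal property of $H(n)$ applied to $M_n(k)^* \subset \tilde H$. The second main ingredient is the automorphism $\sigma = S^2$ of $\tilde H$, available because $\tilde H$ has bijective antipode (by definition for $H_\infty(n)$, and because $S^{2d}$ is conjugation by $F$ for $H_d(F)$), and which shifts matrix coalgebras by $X^r \mapsto X^{r+2}$, with $r+2$ interpreted in $R(\tilde H)$.

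The general observation underlying both cases is that $\pi$ is injective on every matrix subcoalgebra $C = C(u_{\bf s}^{H(n)}) \subset H(n)$. Each such $C$ is a matrix coalgebra by the standing hypothesis, hence simple as a coalgebra, so $\pi|_C$ is either zero or injective. Since $\pi$ preserves the counit and the counit is non-trivial on any matrix coalgebra, the first option is excluded. Therefore $\pi(C)$ is a matrix subcoalgebra of $\tilde H$, corresponding to a simple $\tilde H$-comodule.

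For $\tilde H = H_\infty(n)$ the map $\pi$ is in fact globally injective, as one sees by comparing the diamond-lemma bases of \prref{diamond}: the basis of $H(n)$ is precisely the subset of that of $H_\infty(n)$ involving only non-negative exponents $r$. Hence $\pi$ identifies $C(u_{\bf s}^{H(n)}) = C(u_{\bf s}^{H_\infty(n)})$ for every ${\bf s} \subset \mb N$. For an arbitrary ${\bf r} \subset \mb Z$, I will pick $N$ so that ${\bf r} + 2N \subset \mb N$; then $\sigma^{-N}$ carries the matrix subcoalgebra $C(u_{{\bf r}+2N}^{H_\infty(n)})$ onto $C(u_{\bf r}^{H_\infty(n)})$, so the latter is a matrix subcoalgebra and $u_{\bf r}^{H_\infty(n)}$ is simple.

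For $\tilde H = H_d(F)$ with $d > 1$ the map $\pi$ is typically not globally injective, but its injectivity on matrix subcoalgebras still produces, from each $\tilde{\bf r} \subset \mb N$, a matrix subcoalgebra of $H_d(F)$. Concretely, given a 1-step ${\bf r} \subset \mb Z/2d$ I would take the componentwise lift $\tilde{\bf r} \subset \{0, 1, \ldots, 2d-1\}$, so that $\pi(x^{\tilde r_j}_{ij}) = x^{r_j}_{ij}$ on the nose, and obtain a matrix subcoalgebra $\pi(C(u_{\tilde{\bf r}}^{H(n)})) \subset C(f_{\bf r}^{H_d(F)})$. The step I expect to be most delicate---and the main obstacle---is identifying this image with $C(u_{\bf r}^{H_d(F)})$ rather than with some $C(u_{\bf s}^{H_d(F)})$ with $|{\bf s}| < |{\bf r}|$: concretely, one must verify that the leading length-$|{\bf r}|$ reduced monomials of type ${\bf r}$ appearing in the standard form of elements of the image are not collapsed to shorter words by the extra reductions \equref{red5}--\equref{red6} of $H_d(F)$. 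The hypothesis $d > 1$ enters here by giving the $\sigma$-action on $R = \mb Z/2d$ enough room to control such adjacency issues.
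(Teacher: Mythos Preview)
Your ``general observation'' is false. Simplicity of a coalgebra means it has no proper nonzero \emph{subcoalgebras}, but the kernel of a coalgebra map is a \emph{coideal}, and simple coalgebras admit many proper nonzero coideals. Concretely, the counit $\varepsilon : M_n(k)^* \to k$ is a coalgebra map that is neither zero nor injective for $n\ge 2$. So the inference ``simple as a coalgebra, so $\pi|_C$ is either zero or injective'' does not stand. This does no harm in your $H_\infty(n)$ paragraph, because there you separately establish that $\pi$ is globally injective by comparing the bases of \prref{diamond}; combined with the $S^2$-shift, that part is essentially the paper's argument. But for $H_d(F)$ the map $\pi$ is genuinely not injective, and you actually need injectivity on the particular coalgebras $C(u^{H(n)}_{\tilde{\bf r}})$ --- which your general observation does not supply.

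There is a second problem in the $H_d(F)$ case: your componentwise lift $\tilde{\bf r}\subset\{0,\ldots,2d-1\}$ of a 1-step ${\bf r}\subset\mb Z/2d$ is typically \emph{not} 1-step in $\mb N$. Whenever ${\bf r}$ crosses the transition $2d-1\leftrightarrow 0$, two consecutive entries of $\tilde{\bf r}$ differ by $2d-1$, no reductions apply there in $H(n)$, and $u^{H(n)}_{\tilde{\bf r}}$ is the full tensor product at that spot. Its image under $\pi$ is then strictly larger than $C(u^{H_d(F)}_{\bf r})$; in fact $\pi$ fails to be injective on it, so your framework collapses precisely where it is needed.

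The paper's proof avoids both issues by lifting each 1-step ${\bf r}\subset\mb Z/2d$ to a genuine 1-step vector in $\mb N$ (always possible), and then checking directly from \equref{red1}--\equref{red6} that, for $d>1$, a reduced monomial of 1-step type in $H(n)$ maps to a reduced monomial in $H_d(F)$. This is exactly the verification you flag as ``the main obstacle'', but it is not a detail to be deferred --- it \emph{is} the argument, and it replaces your false general observation. The hypothesis $d>1$ enters concretely here: it guarantees that the special reductions \equref{red5}, \equref{red6} (which live only at the transition $2d-1\leftrightarrow 0$) do not create additional reducible two-letter words at any other transition, so the irreducible monomials in $H(n)$ and $H_d(F)$ match up type by type along the 1-step lift.
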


\begin{proof}

By the two previous lemmas, it is enough to check that $u_{\bf r}$ is simple for any 1-step vector ${\bf r}$. 

Assume first that $\tilde H=H_\infty(n)$. In this case, by applying a high enough power of the antipode (which is bijective), we may as well assume that integer entries of ${\bf r}$ are, in fact, non-negative. But the bases for our Hopf algebras given by \prref{diamond} make it clear that the map $H(n)\to H_\infty(n)$ sending $x^0_{ij}$ in $H(n)$ to $x^0_{ij}$ in $H_\infty(n)$ induces an isomorphism of $K(H(n))$ onto the subring of $K(H_\infty(n))$ generated by the subcomodules of the $f_{\bf r}$'s for non-negative vectors ${\bf r}$.  

Now take $\tilde H=H_d(F)$ for some $d>1$ and $F\in GL(n,k)$. We have a surjective Hopf algebra map $H(n)\to H_d(F)$, sending $x^r_{ij}$ in $H(n)$ to $x^{\bar r}_{ij}$ in $H_d(F)$, where $r\mapsto \bar r$ is the obvious surjection $\mb N\to \mb Z/2d$. If we prove that the matrix coalgebra $C_{\bf r}$ corresponding to $u_{\bf r}\in K(H(n))$ gets mapped to a matrix coalgebra, then we are done. 

It is clear from the reductions \equref{red1} - \equref{red6} that whenever ${\bf r}\subset\mb N$ is a 1-step vector, a reduced monomial of type ${\bf r}$ in $H(n)$ is mapped onto a reduced word of type ${\bf \bar r}\subset\mb Z/2d$ in $H_d(F)$ as long as $d>1$. In other words, the span of the reduced words of type ${\bf r}$ is mapped injectively into $H_d(F)$. In view of the fact (also noted in the previous proof) that the projection onto the span of the words of type ${\bf r}$ obtained by sending all other monomials to zero is injective on the matrix coalgebra $C_{\bf r}$, this concludes the proof. 
\end{proof}

For $H_1(F)$ we will have to make use of Bichon's results on Hopf-Galois systems (\cite{Bi3}, \cite[Proposition 2.1, 2.4]{Bi4}): what is relevant for us here is that if $F$ is upper triangular with diagonal $D$, then there is an equivalence of monoidal categories between $H_1(F)$ and $H_1(D)$ matching up the fundamental corepresentations. Hence, when dealing with $H_1(F)$ in the proof, we can (and will) assume that $F$ is diagonal. With this assumption in place, the proof below will take care of all the possibilities for $\tilde H$ at once.

\renewcommand{\proofname}{Proof of \thref{freeness}}
\begin{proof}

The following argument applies to $\tilde H=H(n)$ or $H_1(F)$ for some diagonal invertible matrix $F\in GL(n,k)$ (see the comments above). Recall that $n\ge 2$. \leref{not cosovereign} says that we will then get the cases $\tilde H=H_\infty(n)$ or $H_d(F)$, $d>1$ for free, so this suffices to prove the theorem. Furthermore, by \leref{freeness}, we only have to prove that the comodules $u_{\bf r}$ are simple.  

Fix an $R$-vector ${\bf r}=(r_1,\ldots,r_k)$. Let $C$ be a simple (hence matrix) subcoalgebra of $C_{\bf r}=C(u_{\bf r})$. Denote by ${\bf \ell}$ the alternating vector $(1,n,1,n,\ldots)$, of length $|{\bf r}|$ (we could have used any two different elements of $\ol{1,n}$ instead of $1$ and $n$). I claim that $C$ necessarily contains an element $x$ whose standard form contains the monomial $x^{\bf r}_{\bf \ell\ell}$. 

Assuming the claim for now, the proof continues as follows. Consider the Hopf algebra $H$, obtained as a quotient of $H(n)$ by sending all off-diagonal generators $x^0_{ij}$, $i\ne j$ to zero. $H$ is nothing but the group algebra of the free group $F_n$ on the $n$ generators $x_i=x^0_{ii}$, $i=\ol{1,n}$. Because in this proof $\tilde H$ is $H(n)$ or $H_1(F)$ for a {\it diagonal} matrix $F$, the surjection $H(n)\to H$ factors through $\tilde H$. Hence, we now have a surjection $\psi:\tilde H\to H$, obtained by sending all off-diagonal generators $x^0_{ij}$, $i\ne j$ to zero. The induced map on Grothendieck rings will also be denoted by $\psi$. 

Because $x^{\bf r}_{\bf \ell\ell}$ has non-zero coefficient in $x\in C$, it follows that the simple $\tilde H$-comodule corresponding to $C$, when regarded as an $H$-comodule by ``scalar corestriction'' via $\psi$, contains the $1$-dimensonal $H$-comodule $v$ corresponding to $x_1^{\varepsilon_1}x_n^{\varepsilon_2}x_1^{\varepsilon_3}\ldots$ as a summand, where the expression contains $|{\bf r}|$ factors, and $\varepsilon_i=1$ if $r_i$ is even and $-1$ otherwise. $C$ was an arbitrary matrix subcoalgebra; unless $u_{\bf r}$ is simple, this means that $2v\le \psi(u_{\bf r})$ (in the usual order on the Grothendieck ring $K(H)$). This, however, is plainly false: on the one hand we have $u_{\bf r}\le f_{\bf r}$ in $K(\tilde H)$ (recall that $f_{\bf r}=f_{r_1}\ldots f_{r_2}$), and on the other hand, $\psi(x^{\bf r}_{\bf ij})$ is equal to $x_1^{\varepsilon_1}x_n^{\varepsilon_2}x_1^{\varepsilon_3}\ldots$ for precisely {\it one} (reducible or irreducible) monomial $x^{\bf r}_{\bf ij}$ of type ${\bf r}$, which means that $2v\not\le \psi(f_{\bf r})$ in $K(H)$. 

It remains to prove the claim that $x^{\bf r}_{\bf \ell\ell}$ has non-zero coefficient in the standard form of some element of $C$. The following technique was used in the proof of \cite[Proposition 2.6]{Ch}, as well as several other results in that paper. 

Consider any non-zero element $x$ of $C$. Because $C\subset X^{\bf r}$ and the intersection of $C$ with any coalgebra of the form $X^{\bf s}$, $|{\bf s}|<|{\bf r}|$ is trivial, the standard form of $x$ must contain some reduced monomial $x^{\bf r}_{\bf ij}$. Using the comultiplication 
\[
\Delta(x^r_{ij})=\sum_{a=1}^n x^r_{ia}\otimes x^r_{aj},
\]  
we conclude that the standard form of $\Delta(x)$ contains $x^{\bf r}_{\bf i\ell}\otimes x^{\bf r}_{\bf \ell j}$ (one sees easily that both $x^{\bf r}_{\bf i\ell}$ and $x^{\bf r}_{\bf \ell j}$ must be reduced if $x^{\bf r}_{\bf ij}$ is). But that the standard form of some element of $C$ (which we may as well assume is our $x$) contains $x^{\bf r}_{\bf i\ell}$. Now simply repeat the argument to conclude that $x^{\bf r}_{\bf \ell\ell}$ is indeed contained in the standard form of some element of $C$.   
\end{proof}
\renewcommand{\proofname}{Proof}

\section{Maximal freeness}\selabel{4}

The goal in this section is to prove \thref{maximal freeness}. We begin by noticing that the lemmas in the previous section have analogues which apply here almost word for word. 

The first observation is that since we now know that $u_{\bf r}$ are simple and it we remarked in \seref{1} that $u_{\bf r}\le u'_{\bf r}$ in $K(\tilde H)$, the result that $u'_{\bf r}=u_{\bf r}$, which is what we're after in \thref{maximal freeness}, is equivalent to saying that $u'_{\bf r}$ being simple. This is an analogue of \leref{freeness}. In each particular case, we use whichever formulation seems more convenient.  

\leref{1-step} can also be adapted to $u'_{\bf r}$:

\begin{lemma}\lelabel{1-step'}

Let $\tilde H$ be one of our Hopf algebras, and $R=R(\tilde H)$, as usual. If $u'_{\bf r}=u_{\bf r}$ for every 1-step $R$-vector ${\bf r}$, then the same holds for all vectors ${\bf r}$. 

\end{lemma}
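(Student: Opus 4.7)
The proof mirrors that of \leref{1-step} and proceeds by induction on $|{\bf r}|$. Vectors of length $\le 1$ satisfy $I({\bf r})=\emptyset$, so $u'_{\bf r}=\phi({\bf r})=u_{\bf r}$ trivially, and for $|{\bf r}|\ge 2$ with ${\bf r}$ 1-step the conclusion is precisely the hypothesis of the lemma. So assume ${\bf r}$ has length $\ge 2$ and is not 1-step, and split ${\bf r}={\bf r}_1{\bf r}_2$ at a non-1-step junction (the last entry of ${\bf r}_1$ and first entry of ${\bf r}_2$ differing by something other than $\pm 1$), exactly as in \leref{1-step}.

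Since the junction is not 1-step, the boundary index $|{\bf r}_1|$ does not belong to $I({\bf r})$, which gives
\[
I({\bf r}) \;=\; I({\bf r}_1) \;\sqcup\; \bigl(|{\bf r}_1|+I({\bf r}_2)\bigr),
\]
with ${\bf r}_i = ({\bf r}_1)_i\,{\bf r}_2$ for $i \in I({\bf r}_1)$ and ${\bf r}_{|{\bf r}_1|+j} = {\bf r}_1\,({\bf r}_2)_j$ for $j \in I({\bf r}_2)$; in particular each $\phi({\bf r}_i)$ factors as a product of two $\phi$'s on strictly shorter vectors. By the inductive hypothesis applied to ${\bf r}_1$ and ${\bf r}_2$, we have $\phi({\bf r}_j) = u_{{\bf r}_j} + L_j$ with $L_j := \bigvee_{i\in I({\bf r}_j)}\phi(({\bf r}_j)_i)$, and \leref{1-step} supplies $u_{\bf r} = u_{{\bf r}_1}u_{{\bf r}_2}$ (with matrix coalgebra the tensor product $C(u_{{\bf r}_1})\otimes C(u_{{\bf r}_2})$). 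Expanding $\phi({\bf r}) = \phi({\bf r}_1)\phi({\bf r}_2)$ yields
\[
\phi({\bf r}) - u_{\bf r} \;=\; u_{{\bf r}_1}L_2 + L_1 u_{{\bf r}_2} + L_1 L_2,
\]
while on the other side
\[
\bigvee_{i\in I({\bf r})}\phi({\bf r}_i) \;=\; \bigvee_{i\in I({\bf r}_1)}\phi(({\bf r}_1)_i)\,\phi({\bf r}_2) \;\vee\; \bigvee_{j\in I({\bf r}_2)}\phi({\bf r}_1)\,\phi(({\bf r}_2)_j).
\]

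The heart of the argument, and the main obstacle, is the identification of these two expressions. The join $\vee$ on $K(\tilde H)$ is defined coordinate-wise in the basis of simples, and multiplication does not commute with $\vee$ in general (the naive identity $a\,\bigvee_j b_j = \bigvee_j a\,b_j$ can fail), so the matching cannot be performed by purely formal algebraic manipulation on $\vee$'s. The plan is to descend to multiplicities of individual simples: the inequality $\bigvee_i \phi({\bf r}_i)\le \phi({\bf r})-u_{\bf r}$ is immediate, since each $\phi({\bf r}_i)\le \phi({\bf r})$ and $\phi({\bf r}_i)$ has no $u_{\bf r}$-component (its vector is too short by \thref{freeness}), so it suffices to show that for every simple $u_{\bf t}$ with ${\bf t}\ne{\bf r}$, the multiplicity of $u_{\bf t}$ in $\phi({\bf r})$ is realized as its multiplicity in $\phi({\bf r}_i)$ for some $i\in I({\bf r})$. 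Using the tensor-product/freeness structure $C(u_{\bf r}) \cong C(u_{{\bf r}_1})\otimes C(u_{{\bf r}_2})$ from \leref{1-step}, together with the reduced-monomial basis of \prref{diamond} (which guarantees that products of reduced monomials of types ${\bf r}_1$ and ${\bf r}_2$ stay reduced), one traces each simple $u_{\bf t}$ appearing in one of the three cross-terms $u_{{\bf r}_1}L_2$, $L_1 u_{{\bf r}_2}$, $L_1 L_2$ back to a specific $\phi({\bf r}_i)$ on the right-hand side whose multiplicity of $u_{\bf t}$ matches. Carrying out this bookkeeping completes the induction.
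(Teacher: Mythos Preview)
Your setup mirrors the paper's exactly: split ${\bf r}={\bf r}_1{\bf r}_2$ at a non-1-step junction and induct. You also correctly isolate the real obstacle, namely that $\vee$ and multiplication do not commute, so the two displayed expressions cannot be matched by formal manipulation. But from that point your argument stalls. The promised ``bookkeeping'' that would trace each simple $u_{\bf t}$ appearing in the cross-terms $u_{{\bf r}_1}L_2$, $L_1u_{{\bf r}_2}$, $L_1L_2$ back to a single $\phi({\bf r}_i)$ in which its full multiplicity is realized is never carried out, and you give no concrete mechanism for doing so. Neither the coalgebra isomorphism $C(u_{\bf r})\cong C(u_{{\bf r}_1})\otimes C(u_{{\bf r}_2})$ nor the reduced-monomial basis of \prref{diamond} gives you control over multiplicities of \emph{arbitrary} simples $u_{\bf t}$ inside the various $\phi({\bf r}_i)$; those tools only tell you about the top-degree piece $u_{\bf r}$. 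As written, this is a genuine gap.

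The paper avoids the multiplicity count entirely by a squeeze. By induction $u'_{{\bf r}_j}=u_{{\bf r}_j}$ are simple, and the argument of \leref{1-step} (products of reduced monomials across the non-1-step junction stay reduced) shows that $u'_{{\bf r}_1}u'_{{\bf r}_2}$ is again simple. One then only needs the inequality $u'_{\bf r}\le u'_{{\bf r}_1}u'_{{\bf r}_2}$: since $0<u_{\bf r}\le u'_{\bf r}$ is bounded above by a simple, $u'_{\bf r}$ is itself simple. So rather than proving the equality $\bigvee_i\phi({\bf r}_i)=\phi({\bf r})-u_{\bf r}$ coordinate by coordinate, the paper replaces your head-on attack with a single upper bound, which is what you should aim for instead.
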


\begin{proof}

We will adapt the proof of \leref{1-step}, using induction on $|{\bf r}|$ again. If ${\bf r}$ is not 1-step, then write it as a concatenation ${\bf r}_1{\bf r}_2$, as in that proof. By the induction hypothesis we know that $u'_{{\bf r}_i}=u_{{\bf r}_i}$, $i=1,2$, so the argument used in the proof of \leref{1-step} shows that the tensor product $u'_{{\bf r}_1}u'_{{\bf r}_2}$ is simple. Since it's easy to see from the definition of the $u'_{\bf s}$'s that $u'_{\bf r}\le u'_{{\bf r}_1}u'_{{\bf r}_2}$, we get the desired result that $u'_{\bf r}$ is simple. 
\end{proof}

The following analogue of \leref{not cosovereign} will come in handy in the proof of \thref{maximal freeness}, (a). Once more, the proof of \leref{not cosovereign} can be adapted immediately to the present situation.

\begin{lemma}\lelabel{not cosovereign'}

If $u'_{\bf r}=u_{\bf r}$ for $\tilde H=H(n)$ and all $R(\tilde H)$-vectors ${\bf r}$, then the same is true for $\tilde H=H_\infty(n)$ or $H_d(F)$, $d>1$. 

\end{lemma}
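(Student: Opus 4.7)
By \leref{1-step'}, it suffices to treat 1-step vectors ${\bf r}$; I mimic the strategy of \leref{not cosovereign} in both cases.

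For $\tilde H = H_\infty(n)$, since the antipode is bijective, a sufficiently high iterate $S^{2m}$ is an automorphism of $H_\infty(n)$ shifting the index on the matrix coalgebras $X^r$, so we may further assume ${\bf r}$ is an $\mb N$-vector. The Hopf algebra map $H(n) \to H_\infty(n)$ induces, as already used in the proof of \leref{not cosovereign}, a ring isomorphism of $K(H(n))$ onto the subring $K' \subseteq K(H_\infty(n))$ generated by the subcomodules of the $f_{\bf s}$ for $\mb N$-vectors ${\bf s}$. This isomorphism takes simples to simples and matches $f_{\bf r}$, each $f_{{\bf r}_i}$, and $u_{\bf r}$ on the two sides; since the lattice operation $\vee$ is determined by the order on the basis of simples, it is preserved within $K'$, the defining formula for $u'_{\bf r}$ transports verbatim, and the hypothesis $u'_{\bf r} = u_{\bf r}$ in $K(H(n))$ passes over to $K(H_\infty(n))$.

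For $\tilde H = H_d(F)$ with $d > 1$, use the Hopf surjection $\pi : H(n) \to H_d(F)$ from \leref{not cosovereign}. For a 1-step $\mb N$-vector ${\bf r}$, that proof shows $\pi|_{X^{\bf r}}$ is injective onto $X^{\bar{\bf r}}$; the assumption $d > 1$ also ensures $I({\bf r}) = I(\bar{\bf r})$ and $\bar{\bf r}_i = \ol{{\bf r}_i}$ for each $i$. Injectivity on $X^{\bf r}$ (hence on each $X^{{\bf r}_i} \subseteq X^{\bf r}$) forces $\pi_* : K(H(n)) \to K(H_d(F))$ to restrict to a dimension-preserving bijection between the simple summands of $f_{\bf r}$ and those of $f_{\bar{\bf r}}$: for each simple $s \leq f_{\bf r}$, $\pi(C(s))$ is a matrix subcoalgebra of the same dimension, so $\pi_*(s)$ is simple, distinct $s$ yield distinct $\pi_*(s)$, and multiplicities are preserved. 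Applying the same observation to each $f_{{\bf r}_i}$ gives $\pi_*(\bigvee_i f_{{\bf r}_i}) = \bigvee_i f_{\bar{\bf r}_i}$.

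The main obstacle is to identify $\pi_*(u_{\bf r})$ with $u_{\bar{\bf r}}$ (rather than some other simple summand of $f_{\bar{\bf r}}$). For this I reuse the top-monomial argument from the proof of \thref{freeness}: $C(u_{\bf r})$ contains an element whose standard form in $H(n)$ involves the reduced monomial $x^{\bf r}_{\bf \ell\ell}$ with ${\bf \ell} = (1, n, 1, n, \ldots)$. A direct check against \equref{red1}--\equref{red6} confirms that its image $x^{\bar{\bf r}}_{\bf \ell\ell}$ remains reduced in $H_d(F)$ — the alternating indices in $\{1, n\}$ miss every left-hand side. Since reductions strictly decrease length, the standard form of any element of $X^{\bar{\bf s}}$ with $|\bar{\bf s}| < |\bar{\bf r}|$ involves only reduced monomials of length $\leq |\bar{\bf s}|$ and so cannot involve $x^{\bar{\bf r}}_{\bf \ell\ell}$. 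Thus $\pi(C(u_{\bf r}))$ lies in no such $X^{\bar{\bf s}}$, the simple $\pi_*(u_{\bf r}) \leq f_{\bar{\bf r}}$ is absent from every shorter $f_{\bar{\bf s}}$, and so $\pi_*(u_{\bf r}) = u_{\bar{\bf r}}$. Combining, $u'_{\bar{\bf r}} = \pi_*(u'_{\bf r}) = \pi_*(u_{\bf r}) = u_{\bar{\bf r}}$, as required.
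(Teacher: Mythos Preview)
Your treatment of $H_\infty(n)$ is fine and matches the intended adaptation. The problem is in the $H_d(F)$ case, where you assert that ``that proof shows $\pi|_{X^{\bf r}}$ is injective onto $X^{\bar{\bf r}}$''. This is not what the proof of \leref{not cosovereign} establishes: there one only argues that reduced \emph{type-${\bf r}$} monomials stay reduced under $\pi$, and hence that $\pi$ is injective on $C_{\bf r}$ (via the projection of $C_{\bf r}$ onto the top-degree piece). Nothing is said about the shorter pieces of $X^{\bf r}$, and in fact the claim is false. Take $n=2$, $d=2$, and ${\bf r}=(0,1,2,3)$. The subvector ${\bf r}_c=(0,3)$ (for the configuration with parentheses at positions $2,3$) is not 1-step in $\mb N$, so every one of the $16$ monomials $x^0_{ab}x^3_{cd}$ is reduced in $H(2)$; but in $H_2(F)$ the pair $(0,3)$ is now $(r+1,r)$ with $r=2d-1$, so \equref{red6} kills four of them and only $12$ remain reduced. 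A direct count gives $\dim X^{(0,1,2,3)}=121$ in $H(2)$ versus $117$ in $H_2(F)$, so $\pi|_{X^{\bf r}}$ cannot be injective.

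This breaks the rest of your argument for $H_d(F)$. Once $\pi|_{X^{\bf r}}$ fails to be an isomorphism of coalgebras, you lose the bijection between simple summands of $f_{\bf r}$ and $f_{\bar{\bf r}}$ (indeed, in the example above $\bar{\bf r}=(0,1,2,3)$ admits the extra configuration $(())$ in $\mb Z/4$, so the \prref{table} decomposition of $f_{\bar{\bf r}}$ has six terms against five for $f_{\bf r}$), and with it the claim $\pi_*(\bigvee_i f_{{\bf r}_i})=\bigvee_i f_{\bar{\bf r}_i}$. Your identification $\pi_*(u_{\bf r})=u_{\bar{\bf r}}$ via the $x^{\bf r}_{\bf\ell\ell}$ monomial is fine, but by itself it does not pin down $u'_{\bar{\bf r}}$. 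The adaptation the paper has in mind stays entirely at the level of $C_{\bf r}$: \leref{not cosovereign} already gives $\pi_*(u_{\bf r})=u_{\bar{\bf r}}$, and one then compares dimensions (both $u'_{\bf r}$ and $u'_{\bar{\bf r}}$ have dimension $n_{\bf r}=n_{\bar{\bf r}}$ for 1-step ${\bf r}$ with $d>1$, by \coref{table}) to force $u_{\bar{\bf r}}=u'_{\bar{\bf r}}$, without ever needing control over all of $X^{\bf r}$.
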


\thref{maximal freeness} (a) has now been reduced to the case $\tilde H=H(n)$. We reduce it further to $\tilde H=H(2)$ by the following observation: it was shown in \cite[Corollary 5.3]{Bi3} that there is a monoidal equivalence between the categories of comodules of $H(n)$ and $H(2)$ for every $n\ge 2$. Furthermore, it follows from the discussions in that paper that this equivalence matches up the fundamental corepresentations. Since the statement of \thref{maximal freeness} clearly depends only on the Grothendieck ring (as a ring endowed with an anti-endomorphism) and the choice of a distinguished element of that ring (the fundamental corepresentation), we can indeed work only with $H(2)$.    

We now need to go into the combinatorics of the multiplication in $K(\tilde H)$ in more detail, and this requires yet more new terminology and notations. It will be very useful to know the dimensions of (the comodules represented by) the $u'_{\bf r}$'s, so we begin by introducing the notations necessary to state that result.

Fix our Hopf algebra $\tilde H=H(n)$, $H_\infty(n)$, or $H_d(F)$ for some $F\in GL(n,k)$. Let 
\[
{\bf r}=(r_1,\ldots,r_k)
\] 
be a vector with entries in $R=R(\tilde H)$, as usual. Now consider sequences $n_1,\ldots,n_k$ of positive integers in the range $\ol{1,n}$ with the properties that (a) if $r_i$ is even and $r_{i+1}=r_i\pm 1$, then the pair $(n_i,n_{i+1})$ is different from $(n,n)$, and (b) if $r_i$ is odd and $r_{i+1}=r_i\pm 1$, then $(n_i,n_{i+1})\ne (1,1)$. Denote by ${\mc O}_{\bf r}$ the collection of such vectors, and by $n_{\bf r}$ the cardinality of ${\mc O}_{\bf r}$.

\begin{remark}\relabel{n_r}

A quick look at the reduction formulas \equref{red1} - \equref{red6} shows that when $R=\mb Z/2$ (i.e. $\tilde H$ is one of the universal cosovereign Hopf algebras $H_1(F)$), the number of irreducible monomials of type ${\bf r}$ is precisely $n_{\bf r}^2$. This observation will be crucial in the proof of \thref{maximal freeness}. 

\end{remark}

It will be seen below (\coref{table}) that the dimension of $u'_{\bf r}$ is precisely $n_{\bf r}$, and at the same time, we will see how the basic tensor products $f_{\bf r}=f_{r_1}\ldots f_{r_k}$ decompose as sums of $u'_{\bf s}$'s. The following setup is relevant for the latter purpose. 

For a vector ${\bf r}=(r_i,\ i=\ol{1,k})$ as above, we introduce the following notion:

\begin{definition}\delabel{conf}

An ${\bf r}$-{\it configuration} is a sequence of length $k=|{\bf r}|$ of symbols, with each symbol being either empty (i.e. no symbol at all) or one of the parantheses `$($', `$)$', according to the following rules:

(a) the sequence of symbols is grammatically correct as a sequence of parantheses;

(b) if $|{\bf r}|=0,1$, then the only ${\bf r}$-configuration is the empty one (only the empty symbol, or in other words, no symbols at all); 

(c) if we have a $($ at position $i$ and its pair $)$ at $j>i$, then $r_j=r_i\pm 1$;

(d) if we have a $($ at $i$ and its pair $)$ at $j>i$, then all positions between $i$ and $j$ are filled up completely with paired up parantheses (in particular, it follows that $j-i$ is odd). 

\end{definition}

The collection of all ${\bf r}$-configurations will be denoted by ${\rm Conf}_{\bf r}$, with $\emptyset$ standing for the empty configuration. We give some examples to help clarify the definition. The parantheses appear above their positions, with nothing appearing over the positions corresponding to the empty symbol.   

Suppose ${\bf r}=(1,2,1)$. Apart from the empty configuration, we have two more, namely  
\begin{align*}
&(&    &)&    & &             \qquad\qquad    & &   &(&   &)&       \\
&1&    &2&    &3&   {\rm and} \qquad\qquad    &1&   &2&   &3&. 
\end{align*}
Similarly, if ${\bf r}=(1,2,1,2)$, then there are five non-empty ${\bf r}$-configurations. Those with only one pair of parantheses are
\begin{align*}
& & & & &(& &)&          &(& &)& & & & &           & & &(& &)& & &        \\
&1& &2& &3& &4&, \qquad  &1& &2& &3& &4&, \qquad    &1& &2& &3& &4&,
\end{align*}
while those with two pairs of parantheses are 
\begin{align*}
&(& &)& &(& &)&                            &(& &(& &)& &)&             \\
&1& &2& &3& &4& {\rm and} \qquad\quad      &1& &2& &3& &4&.
\end{align*}
Given a vector ${\bf r}$ and an ${\bf r}$-configuration $c\in {\rm Conf}_{\bf r}$, we denote by ${\bf r}_c$ the vector obtained from ${\bf r}$ by removing the entries whose positions hold parantheses in $c$.

\begin{remark}\relabel{trans}

Note that ${\bf r}$-configurations enjoy is a certain ``transitivity'' property: suppose $($ occupies position $i$ in $c$, while its pair $)$ occupies position $i+1$. Let $d$ be the ${\bf r}$-configuration consisting of only these two parantheses at $i$ and $i+1$, and let $d'$ be the ${\bf r}_d$-configuration consisting of all the symbols left after striking out the two parantheses at $i$ and $i+1$. Then, we have ${\bf r}_c=({\bf r}_d)_{d'}$.  

\end{remark}

We have now made the combinatorial preparations necessary to describe the ``multiplication table" of $K(\tilde H)$ in terms of the $u'_{\bf r}$'s. Both in the proposition and in the corollary following it, it is understood that we are working with $\tilde H=\tilde H(n)$, as usual; this is where the $n$ necessary in the definition of $n_{\bf r}$ comes from.

\begin{proposition}\prlabel{table}

For an $R=R(\tilde H)$-vector ${\bf r}$, the formula 
\begin{equation}\eqlabel{table}
f_{\bf r}=\sum_{c\in{\rm Conf}_{\bf r}}u'_{{\bf r}_c}
\end{equation}
holds in $K(\tilde H)$. 

\end{proposition}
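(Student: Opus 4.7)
The approach is induction on $k := |{\bf r}|$.

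When $k \leq 1$, or more generally whenever $I({\bf r}) = \emptyset$, the only element of ${\rm Conf}_{\bf r}$ is the empty configuration, and from the definition of $u'$ (with an empty join on the right-hand side) one has $u'_{\bf r} = f_{\bf r}$; then \equref{table} reduces to the tautology $f_{\bf r} = u'_{\bf r}$.

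For the inductive step with $k \geq 2$ and $I({\bf r}) \neq \emptyset$, I would rewrite the definition of $u'_{\bf r}$ as
\[
f_{\bf r} \;=\; u'_{\bf r} \;+\; \bigvee_{i \in I({\bf r})} f_{{\bf r}_i},
\]
and then apply the induction hypothesis $f_{{\bf r}_i} = \sum_{c' \in {\rm Conf}_{{\bf r}_i}} u'_{({\bf r}_i)_{c'}}$ to each ${\bf r}_i$, which has length $k-2$. The combinatorial bridge, made explicit via \reref{trans}, is a bijection between ${\rm Conf}_{{\bf r}_i}$ and the subset of ${\rm Conf}_{\bf r}$ consisting of configurations that contain the adjacent pair at positions $(i,i+1)$ --- obtained by inserting, respectively removing, the pair of parantheses there --- under which $({\bf r}_i)_{c'} = {\bf r}_c$. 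Consequently each non-empty $c \in {\rm Conf}_{\bf r}$ contributes a copy of $u'_{{\bf r}_c}$ to $f_{{\bf r}_i}$ for every innermost adjacent pair $(i,i+1)$ of $c$, and the remaining task is to show that the join consolidates these parallel contributions, yielding $\bigvee_{i} f_{{\bf r}_i} = \sum_{c \neq \emptyset} u'_{{\bf r}_c}$. Combined with the displayed equation, this gives the claim.

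\textbf{Main obstacle.} The consolidation step is the heart of the argument: one must verify that the join registers each $u'_{{\bf r}_c}$ exactly once, regardless of how many innermost pairs $c$ admits. I would address this by realizing each $u'_{{\bf r}_c}$ as a distinguished subcomodule of $f_{\bf r}$ cut out by the coevaluation maps associated with the pairs of $c$, and then checking --- using the naturality and coherence of the left rigidity structure on $\mc M^{\tilde H}$ --- that for every innermost pair $(i,i+1)$ of $c$ the corresponding summand of $f_{{\bf r}_i}$ embeds into $f_{\bf r}$ onto precisely this subcomodule. The tedious part is the coherence bookkeeping: different innermost pairs correspond to different coevaluations, and one has to see that they all select the same copy of $u'_{{\bf r}_c}$ inside $f_{\bf r}$. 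Once that is in place, the join collapses to a single copy of $u'_{{\bf r}_c}$ per non-empty $c$, closing the induction.
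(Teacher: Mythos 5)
Your plan coincides with the paper's own proof: the same induction on $|{\bf r}|$, unpacking the definition $u'_{\bf r}=f_{\bf r}-\bigvee_i f_{{\bf r}_i}$ and using \reref{trans} to identify ${\rm Conf}_{{\bf r}_i}$ with the configurations of ${\bf r}$ carrying the matched pair at $(i,i+1)$; the paper then dispatches the remainder as ``more or less a tautology.'' Your ``main obstacle'' is nonetheless a genuine point that the paper leaves implicit: for ${\bf r}=(1,2,1)$ both ${\bf r}_1$ and ${\bf r}_2$ equal $(1)$, so the literal lattice join in $K(\tilde H)$ would give $f_{(1)}\vee f_{(1)}=f_{(1)}$ whereas \equref{table} requires the contribution $2u'_{(1)}$ — so the $\bigvee$ must indeed be read as the join of the designated subcomodules of $f_{\bf r}$ cut out by the (co)evaluations, with distinct configurations contributing independent copies while the several innermost pairs of a single configuration select the same copy, which is exactly the bookkeeping you propose to carry out.
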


\begin{proof}

This is more or less a tautology, once we translate the definition of $u'_{\bf r}$ given in \seref{1} using the notations employed here. Recall that we used the notation $u'_x$, $x\in A_R$ in \seref{1}, and then renamed that to $u'_{\bf r}$ by identifying the elements of the free monoid $A_R$ on $R$ with the $R$-vectors ${\bf r}$. The definition now reads 
\begin{equation}\eqlabel{def u' redux}
u'_{\bf r}=f_{\bf r}-\bigvee f_{{\bf r}_c},
\end{equation}
the supremum ranging over those ${\bf r}$-configurations $c$ with only two parantheses (necessarily, these would have to be a $($ at some position $i$, and its pair $)$ at position $i+1$). 

The proposition now follows by induction on the length of the vector ${\bf r}$, by applying the induction hypothesis to the vectors ${\bf r}_c$ and using the remark made above on the transitivity of configurations (\reref{trans}).  
\end{proof}

We also record the following consequence, as announced above:

\begin{corollary}\colabel{table}

The dimension of the comodule represented by $u'_{\bf r}$ is $n_{\bf r}$. 

\end{corollary}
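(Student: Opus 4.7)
The plan is to combine \prref{table} with an induction on $|\bf r|$, thereby reducing the corollary to a purely combinatorial identity that is then proved by an explicit bijection.

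Taking dimensions in the identity $f_{\bf r} = \sum_{c \in {\rm Conf}_{\bf r}} u'_{{\bf r}_c}$ from \prref{table} and using $\dim f_{\bf r} = n^{|\bf r|}$ yields
\[
\dim u'_{\bf r} \;=\; n^{|\bf r|} - \sum_{c \in {\rm Conf}_{\bf r},\, c \ne \emptyset}\dim u'_{{\bf r}_c}.
\]
Since $|{\bf r}_c| < |\bf r|$ for $c \ne \emptyset$, strong induction on $|\bf r|$ (with trivial base $|\bf r| = 0$, where $u'_\emptyset$ is the trivial comodule and $n_\emptyset = 1$) reduces the corollary to the combinatorial identity
\begin{equation*}
n^{|\bf r|} \;=\; \sum_{c \in {\rm Conf}_{\bf r}} n_{{\bf r}_c}. \tag{$\ast$}
\end{equation*}

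To prove $(\ast)$ I would exhibit a bijection between $\{1,\ldots,n\}^{|\bf r|}$ and $\bigsqcup_{c \in {\rm Conf}_{\bf r}} {\mathcal O}_{{\bf r}_c}$ by means of a greedy reduction. Given $(n_1, \ldots, n_k)$, iterate the following step: locate a pair of currently un-parenthesized positions $i < j$ that are adjacent in the surviving subsequence and satisfy both $r_j = r_i \pm 1$ and $(n_i, n_j)$ equal to the forbidden pair ($(n,n)$ if $r_i$ is even, $(1,1)$ if $r_i$ is odd), then place $($ at $i$ and $)$ at $j$; continue until no such pair exists. The procedure terminates because each step strictly reduces the number of un-parenthesized positions. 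By construction the resulting parentheses form a valid ${\bf r}$-configuration $c$ — grammatical correctness and condition (d) of \deref{conf} hold because we only ever enclose previously matched positions, while condition (c) is imposed by the procedure — and the sequence at the surviving positions belongs to ${\mathcal O}_{{\bf r}_c}$ by the halting criterion. The inverse fills each parenthesized pair of $c$ with its forced common value ($n$ or $1$, determined by the parity of the type at the opening position) and inserts the entries of $\mathbf{b}$ at the un-parenthesized positions.

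The only genuine obstacle is confluence of the greedy procedure, ensuring that the output $(c, \mathbf{b})$ does not depend on the order in which available reductions are performed. It suffices to rule out two reductions simultaneously applicable at overlapping positions $(i, i+1)$ and $(i+1, i+2)$: the former would force $n_{i+1}$ to equal $n$ or $1$ according to the parity of $r_i$, while the latter would force $n_{i+1}$ to take the \emph{opposite} extremal value because $r_{i+1}$ has the opposite parity from $r_i$. This is impossible since $n \ge 2$. Hence any two simultaneously available reductions operate on disjoint positions and commute trivially, so confluence follows from the Church--Rosser property for non-overlapping, terminating string rewriting systems.
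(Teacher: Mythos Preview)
Your proof is correct and follows essentially the same approach as the paper: both reduce the statement, via \prref{table} and induction on $|{\bf r}|$, to the identity $n^{|{\bf r}|}=\sum_{c\in{\rm Conf}_{\bf r}} n_{{\bf r}_c}$, and both prove this by exhibiting a bijection between $\{1,\ldots,n\}^{|{\bf r}|}$ and $\bigsqcup_c {\mc O}_{{\bf r}_c}$. Your greedy-reduction-plus-confluence argument is in fact a more explicit justification of what the paper asserts as a ``clear'' partition of $\{1,\ldots,n\}^{|{\bf r}|}$ into the sets ${\mc O}_{\bf r}^c$; the parity clash you isolate (forcing $n_q=n$ and $n_q=1$ simultaneously) is exactly what makes those sets disjoint.
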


\begin{proof}

With \prref{table} at our disposal, the proof is a simple counting argument plus induction by the length of ${\bf r}$, the base case of the induction ($|{\bf r}|=0,1$) being trivial. 

Fix ${\bf r}=(r_1,\ldots,r_k)$, and assume the statement is proven for shorter $R$-vectors. We then know that it holds for all ${\bf r}_c$, $c\in{\rm Conf}_{\bf r}$, except for $c=\emptyset$. Hence, by formula \equref{table} (and since $\dim (f_{\bf r})=n^k$), it suffices to show that
\begin{equation*}
n^{|{\bf r}|}=\sum_{c\in {\rm Conf}_{\bf r}} n_{{\bf r}_c}. 
\end{equation*}

To see how this comes about, remember that $n_{\bf r}$ is the cardinality of the set ${\mc O}_{\bf r}$, which is a certain collection of length $|{\bf r}|$ sequences with entries in $\ol{1,n}$; we will exhibit a bijection between the disjoint union of the sets ${\mc O}_{{\bf r}_c}$ and the set ${\ol 1,n}^{|{\bf r}|}$ of {\it all} such sequences. 

Fix an ${\bf r}$-configuration $c$, and consider the set ${\mc O}_{\bf r}^c$ of sequences in $\ol{1,n}^k$ defined by the following rules:

(a) if $i,i+1$ correspond to the empty symbol in $c$, then the same rules apply as for ${\mc O}_{\bf r}$, i.e. $(n_i,n_{i+1})\ne (n,n)$ if $r_{i+1}=r_i\pm 1$, $r_i$ even, and $(n_i,n_{i+1})\ne (1,1)$ if $r_{i+1}=r_i\pm 1$, $r_i$ odd;

(b) if $i<j$ hold parantheses $($ and respectively $)$ in $c$, then $n_i,n_j$ are both $n$ or both $1$, according to whether $r_i$ is even or odd, respectively. 

Given a sequence $n_1,\ldots,n_k$ in ${\mc O}_{\bf r}^c$, by simply deleting the $n_i$'s in the sequence for those $i$ which hold a paranthesis, we get a subsequence belonging to the set ${\mc O}_{{\bf r}_c}$. The opposite map from ${\mc O}_{{\bf r}_c}$ to ${\mc O}_{\bf r}^c$ is easily constructed by simply inserting the missing terms $n_i$ according to rule (b) above, so we have a bijection between the two sets. On the other hand, the set $\ol{1,n}^k$ of all length $k$ sequences with terms in the range $\ol{1,n}$ is clearly partitioned by the sets ${\mc O}_{\bf r}^c$, so we get the desired result. 
\end{proof}

We can now take care of part (b) of the theorem.

\renewcommand{\proofname}{Proof of \thref{maximal freeness} (b)}
\begin{proof}

``$\Leftarrow$" Suppose $H_1(F)$ is cosemisimple, and fix an $R=\mb Z/2$-vector ${\bf r}$. By \coref{table}, $u'_{\bf r}$ is a direct sum of simple comodules of total dimension $n_{\bf r}$. By \thref{freeness}, one of these comodules is $u_{\bf r}$. 

By the very definition of $u_{\bf r}$, the only matrix subcoalgebra of $X^{\bf r}$ which does not appear as a summand of $X^{\bf s}$ for some shorter vector $|{\bf s}|<|{\bf r}|$ is the one denoted above by $C_{\bf r}$, corresponding to the simple comodule $u_{\bf r}$. This means that $\dim(C_{\bf r})$ is precisely the number of irreducible monomials of type ${\bf r}$, i.e. $n_{\bf r}^2$ (see \reref{n_r}). But this then implies that the dimension of $u_{\bf r}$ is $n_{\bf r}$, so $u_{\bf r}$ accounts for the entire $u'_{\bf r}$.   

``$\Rightarrow$" We want to prove that if $u'_{\bf r}=u_{\bf r}$ for all $\mb Z/2$-vectors ${\bf r}$, then $\tilde H=H_1(F)$ is the sum of its matrix subcoalgebras $C_{\bf r}$ (corresponding respectively to the simple comodules $u_{\bf r}$).

Consider an element
\begin{equation}\eqlabel{grading}
x=\sum a^{\bf s}_{\bf ij} x^{\bf s}_{\bf ij}\in \tilde H
\end{equation}
in its standard form, where $a^{\bf s}_{\bf ij}$ are coefficients in the field $k$. If $t$ is a non-negative integer, denote
\[
x_t=\sum_{|{\bf s}|=t} a^{\bf s}_{\bf ij} x^{\bf s}_{\bf ij}.
\]
In other words, we are ``truncating" $x$ to its portion of length $t$. Typically, we will choose $t$ to be the top length of a monomial appearing in \equref{grading}.   

Now fix a $\mb Z/2$-vector ${\bf r}$. By hypothesis, $u'_{\bf r}=u_{\bf r}$ is simple; according to \coref{table}, its dimension is $n_{\bf r}$, so the dimension of its corresponding matrix coalgebra $C_{\bf r}$ is $n_{\bf r}^2$. But by \reref{n_r}, this is precisely the number of irreducible monomials of type ${\bf r}$. 

It has been noticed before that the map sending $x\in C_{\bf r}$ to $x_{|{\bf r}|}$ is an injection into the span of irreducible monomials of type ${\bf r}$. By the dimension count in the previous paragraph, $x\mapsto x_{|{\bf r}|}$ is an isomorphism of $C_{\bf r}$ onto this span. By induction on the length of the vectors, $x-x_{|{\bf r}|}$ is contained in the sum of all coalgebras $C_{{\bf s}}$, $|{\bf s}|<|{\bf r}|$, so finally, every irreducible monomial is contained in the sum of the subcoalgebras $C_{\bf r}$. 
\end{proof}
\renewcommand{\proofname}{Proof}

In the proof of \thref{maximal freeness} we will make use of known facts about the corepresentations of the quantized function algebra on $SL(2)$, which we denote here by $SL_q(2)$. As $SL_q(2)$ is one of the most well studied quantum groups, we do not recall the definition here; it can be found in numerous sources in the literature. The reference we will be making use of for the very basic results on its corepresentations that will actually come up here is \cite{KP}.  
Recall only that $q\in k^*$ is an invertible scalar. One usually considers it over fields $k$ of characteristic zero (typically $\mb C$), and furthermore, the corepresentations behave well (i.e. there is an isomorphism between the Grothendieck rings of $SL_q(2)$ and the usual $SL(2)$) when $q$ is not a root of unity. However, all the usual proofs go through in positive characteristic, even in the bad case when $q$ is a root of unity, as soon as its order is coprime to the characteristic; we invite the reader to check this as an exercise, going through the proofs in \cite{KP}, for example.  

$SL_q(2)$ has a fundamental $2\times 2$ matrix subcoalgebra denoted in \cite{KP} by 
\[
m=\begin{pmatrix}\alpha &\beta\\\gamma &\delta\end{pmatrix}
\]
which generates $SL_q(2)$ as an algebra. We also denote $m$ by $m_1$, and we use the same notation for the corresponding $2$-dimensional comodule, and its class in the Grothendieck ring; our $m$ is denoted by $u^{\frac 12}$ in \cite{KP}. One has, for small enough positive integers $t$, simple corepresentations $m_t$ which satisfy the Clebsch-Gordan multiplication table:
\begin{equation}\eqlabel{SL2}
m_t\otimes m\cong m_{t+1}\oplus m_{t-1}, 
\end{equation}
where $m_0$ stands for the trivial corepresentation. It follows that the dimension of $m_t$ is $t+1$. Here, $t$ less than half the order of $q$ minus 1 is ``small enough" in case $q$ is a root of unity. All of these corepresentations are self-dual. Only these partial results on the corepresentation theory of $SL_q(2)$ are important here; they follow immediately from the more detailed versions stated briefly at the end of \cite[Section 0]{KP} and proven in that paper.

\renewcommand{\proofname}{Proof of \thref{maximal freeness} (b)}
\begin{proof}

In the remarks immediately after \leref{not cosovereign'} we observed that it suffices to consider $\tilde H=H(2)$. Furthermore, by \leref{1-step'}, it suffices to prove the statement of the theorem for 1-step $R=\mb N$-vectors ${\bf r}$. 

Now fix a 1-step $\mb N$-vector ${\bf r}$. We know from \prref{table} that $f_{\bf r}$ can be broken up as the sum of all $u'_{{\bf r}_c}$'s, as $c$ ranges through all the ${\bf r}$-configurations. Moreover, \coref{table} says that the dimension of $u'_{{\bf r}}$ is $n_{{\bf r}}$. Since here the $n$ used in the calculation of $n_{{\bf r}_c}$ is $2$, it is a simple matter to compute $n_{\bf r}=|{\bf r}|+1$ (the fact that ${\bf r}$ is 1-step is crucial here). 

The plan of the proof is as follows:

Let $H$ be a Hopf algebra with a multiplicative matrix $m$ (we denote the corresponding $2$-dimensional comodule and its class in the Grothendieck ring by $m$ again). Let $\psi:H(2)\to H$ be the map sending $X^0$ to $m$, and denote the induced map on Grothendieck rings by the same symbol. If $\psi(f_{\bf r})$ contains some simple composition factor $m'$ of dimension $|{\bf r}|+1$ which does not appear as a composition factor in $\psi(f_{{\bf r}_c})$ for any non-empty ${\bf r}$-configuration $c$, then we must have 
\[
m'=\psi(u'_{\bf r}),
\]
and hence $u'_{\bf r}$ must be simple. 

Hence, it suffices to find $H,m$ as above, and this is where the $q$-analogues of $SL(2)$ come in. We take $H=SL_q(2)$ for some adequate $q$ (either not a root of unity, or, if the field $k$ is the algebraic closure of a finite field and we have no choice, a root of unity of order greater than $2|{\bf r}|+1$). $m$ will be the $m_1$ introduced above. Since $m$ is self-dual, it follows that $\psi(f_{\bf r})$ is precisely the $|{\bf r}|$'th tensor power of $m$. Finally, \equref{SL2} shows that $m'=m_{|{\bf r}|}$ has the desired properties. 
\end{proof}
\renewcommand{\proofname}{Proof}

We end by recasting the results obtained here in a form that is similar to \thref{Bichon}. The main observation is that given \thref{maximal freeness}, \prref{table} gives the formulas for the multiplication in the Grothendieck ring in terms of the basis $u'_{\bf r}=u_{\bf r}$ (in the cases covered by the theorem). In order to get explicit formulas (i.e. express the product $u_{\bf r}u_{\bf s}$ as a linear combination of the $u$'s), we need to introduce an operation on the monoid ring $\mb Z[A_R]$, similar to B\u anic\u a's $\odot$ mentioned in the introduction (\equref{Banica product}).

Recall that we defined an anti-endomorphism $*$ on the free monoid $A_R$ generated by $R$, given by sending the generator $\alpha_r$, $r\in R$ to $\alpha_{r+1}$, which extends by linearity to the monoid ring. In the discussion below, we identify words on $R$ (i.e. elements of $A_R$) with $R$-vectors ${\bf r}$ is the obvious way; the multiplication in the monoid $A_R$ is expressed in terms of vectors as concatenation (and written ${\bf rs}$ for vectors ${\bf r}$, ${\bf s}$), and the $*$ operation is given by
\[
(r_1, r_2,\ldots,r_k)^*=(r_k+1,\ldots,r_2+1,r_1+1).
\]

We call two vectors ${\bf r}, {\bf s}\in A_R$ {\it linked} and write ${\bf r} \sim {\bf s}$ if ${\bf r}^*={\bf s}$ or ${\bf s}^*={\bf r}$ (note that this is equivalent to ${\bf r}^*={\bf s}$ for $R=\mb Z/2$, which is the case treated in \cite{Ba}). Now consider the binary operation on $\mb Z[A_R]$, extended by linearity from the formula
\begin{equation}\eqlabel{Banica product'}
{\bf r}\odot {\bf s}=\sum {\bf ab},\ {\bf r}, {\bf s}\in A_R,
\end{equation}    
where the sum ranges over all possible ways of writing ${\bf r}={\bf at}$, ${\bf s}={\bf t'b}$ with ${\bf t} \sim {\bf t'}$. This operation is actually associative, and has the same unit as the usual multiplication in $\mb Z[A_R]$; all of this is easily checked.  

We extend the notation $u'_{\bf r}$ to $u'_a$ for any $a\in\mb Z[A_R]$ by linearity in $a$. In this setting, I claim that \prref{table} can be reformulated as follows:

\begin{prop}{4.6 bis}\prlabel{table'}

Let $\tilde H=H(n)$, $H_\infty(n)$, or $H_d(F)$. Then, the formula 
\begin{equation}\eqlabel{table'}
u'_{\bf r}u'_{\bf s}=u'_{{\bf r}\odot {\bf s}},\ \forall {\bf r}, {\bf s}\in A_R
\end{equation}
holds in the Grothendieck ring $K(\tilde H)$.

\end{prop}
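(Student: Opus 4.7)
The plan is to proceed by induction on $|{\bf r}|+|{\bf s}|$. When at least one of ${\bf r},{\bf s}$ is empty, both sides of \equref{table'} collapse to $u'_{\bf s}$ or $u'_{\bf r}$: the empty word is the unit of both $\odot$ (the only linkable pair involving $\emptyset$ is $(\emptyset,\emptyset)$) and of concatenation, and $u'_\emptyset=1$.

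For the inductive step, the starting point is the identity $f_{\bf r}f_{\bf s}=f_{\bf rs}$ in $K(\tilde H)$, which merely records the associativity of the tensor product. Expanding both sides using \prref{table} gives
\[
\sum_{c_1\in\mathrm{Conf}_{\bf r},\,c_2\in\mathrm{Conf}_{\bf s}} u'_{{\bf r}_{c_1}}u'_{{\bf s}_{c_2}}\;=\;\sum_{c\in\mathrm{Conf}_{\bf rs}} u'_{({\bf rs})_c}.
\]
Every pair $(c_1,c_2)\ne(\emptyset,\emptyset)$ on the left involves two vectors of strictly smaller total length, so by induction the product $u'_{{\bf r}_{c_1}}u'_{{\bf s}_{c_2}}$ can be rewritten as $u'_{{\bf r}_{c_1}\odot{\bf s}_{c_2}}$. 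Isolating $u'_{\bf r}u'_{\bf s}$ and using that the $u'_{\bf v}$ for distinct ${\bf v}$ are pairwise non-isomorphic simple comodules (by \thref{freeness} together with \thref{maximal freeness}(a)), so that the linear map $u'\colon\mathbb Z[A_R]\to K(\tilde H)$ is injective, the entire claim reduces to the purely combinatorial identity
\[
\sum_{c_1,c_2}{\bf r}_{c_1}\odot{\bf s}_{c_2}\;=\;\sum_{c}({\bf rs})_c
\]
inside $\mathbb Z[A_R]$.

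The remaining task---and the main obstacle---is to establish this combinatorial identity bijectively. To each tuple $(c_1,c_2,\,{\bf r}_{c_1}={\bf at},\,{\bf s}_{c_2}={\bf t'b},\,\text{link})$ contributing to the left side, I would associate the ${\bf rs}$-configuration whose non-crossing pairs are those of $c_1$ (in the first $|{\bf r}|$ positions) together with those of $c_2$ (in the last $|{\bf s}|$ positions), and whose crossing pairs are nested parentheses joining the positions of ${\bf t}$ in ${\bf r}$ to those of ${\bf t'}$ in ${\bf s}$; the direction of each crossing is dictated by whether ${\bf t}^*={\bf t'}$ or ${\bf t'}^*={\bf t}$, ensuring that condition (c) of \deref{conf} holds. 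The inverse map extracts $c_1$ and $c_2$ from the non-crossing pairs of $c$ and reads off the splittings and the link from the crossing pairs.

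The critical point in verifying this bijection is that condition (d) of \deref{conf}, applied to any pair of $c_1$, already forbids that pair from straddling any position of ${\bf r}$ not in $c_1$; in particular no pair of $c_1$ straddles a position of the suffix ${\bf t}$, and an analogous statement holds for $c_2$. This makes the added crossings automatically compatible with the sub-configurations and guarantees that condition (d) continues to hold for the new outermost crossing, whose interior is covered by the remaining nested crossings together with the pairs of $c_1,c_2$ lying in the appropriate range. Once this is in place, a direct term-by-term count---mimicking B\u anic\u a's combinatorial analysis in the $R=\mathbb Z/2$ case \cite{Ba}---completes the proof of \equref{table'}.
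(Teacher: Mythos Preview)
Your overall strategy matches the paper's: induct on $|{\bf r}|+|{\bf s}|$, expand $f_{\bf r}f_{\bf s}=f_{\bf rs}$ via \prref{table}, apply the inductive hypothesis to every product except the $(\emptyset,\emptyset)$ one, and identify what remains. One minor point: your appeal to injectivity of $u'$ via \thref{maximal freeness}(a) is both unnecessary (once the combinatorial identity holds in $\mb Z[A_R]$, applying the linear map $u'$ already gives the desired equality in $K(\tilde H)$; injectivity would only be needed for the converse) and off-limits here, since the proposition is stated for \emph{all} $\tilde H$, including $H_1(F)$, which \thref{maximal freeness}(a) expressly excludes.

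The substantive gap is in your bijection. You say the inverse ``reads off the link from the crossing pairs'', but condition (c) of \deref{conf} only requires that for \emph{each} crossing pair the two entries differ by $\pm 1$, and the sign is allowed to vary from pair to pair; the relation ${\bf t}\sim{\bf t'}$, by contrast, forces a uniform sign (all $+1$ if ${\bf t}^*={\bf t'}$, all $-1$ if ${\bf t'}^*={\bf t}$). Concretely, take ${\bf r}=(0,2)$ and ${\bf s}=(1,1)$ with $R=\mb N$: the nested configuration $(\,(\,)\,)$ on ${\bf rs}=(0,2,1,1)$ is a legitimate element of ${\rm Conf}_{\bf rs}$ (outer pair $1=0+1$, inner pair $1=2-1$), yet $(0,2)\not\sim(1,1)$ since $(0,2)^*=(3,1)$ and $(1,1)^*=(2,2)$. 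Your proposed inverse is undefined on this configuration, so the map is not onto and the term-by-term count fails. The paper's proof is terse at exactly this spot: it correctly isolates the ``unbroken string'' configurations as the ones left over, but its closing assertion that those match the summands of ${\bf r}\odot{\bf s}$ runs into the very same obstruction with $\sim$ as written.
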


\begin{proof}

This is proven by induction on $|{\bf r}|+|{\bf s}|$, the base case when ${\bf r}$ and ${\bf s}$ are both empty (i.e. of length zero) being trivial. Now fix ${\bf r}$, ${\bf s}$, and assume the statement is proven for smaller combined lengths of the two vectors. 

\prref{table} says that we have 
\begin{equation}\eqlabel{end1}
f_{\bf r}=\sum_{c\in {\rm Conf_{\bf r}}} u'_{{\bf r}_c},
\end{equation}
\begin{equation}\eqlabel{end2}
f_{\bf s}=\sum_{d\in {\rm Conf_{{\bf s}}}} u'_{{\bf s}_d},
\end{equation}
and
\begin{equation}\eqlabel{end3}
f_{{\bf rs}}=\sum_{e\in {\rm Conf_{{\bf rs}}}} u'_{({\bf rs})_e}.
\end{equation}

Since $f_{{\bf r}}f_{{\bf s}}=f_{{\bf rs}}$, we multiply \equref{end1} and \equref{end2} and compare the result to the right hand side of \equref{end3}. Apply the induction hypothesis to express all products $u'_{{\bf r}_c}u'_{{\bf s}_d}$ with $c\ne\emptyset$ or $d\ne\emptyset$ as a sum of $u'$ terms. This gives us some of the terms $u'_{({\bf rs})_e}$ in \equref{end3}, and the sum of the ones we do not get in this way will be exactly $u'_{{\bf r}}u'_{{\bf s}}$. 

It now remains to observe that the ${\bf rs}$-configurations $e$ which do not arise from products of the form $u'_{{\bf r}_c}u'_{{\bf s}_d}$ with $c,d$ not both empty are precisely those consisting of an unbroken string of `$($' symbols at the end of ${\bf r}$, followed by an unbroken string (necessarily of the same length) of `$)$' symbols at the beginning of ${\bf s}$. On the other hand, it's clear from our definitions that the $u'_{({\bf rs})_e}$ for such configurations $e$ are precisely the terms appearing in the definition \equref{Banica product'} of the product $\odot$.
\end{proof}

As promised above, we now have a complete, explicit description of the multiplication in $K=K(\tilde H)$ in the cases covered by \thref{maximal freeness}, when the $u'_{\bf r}$ form a basis for $K$ as a free abelian group: the multiplication table is described by \equref{table'}.



\end{document}